\newtheorem{theorem}{Theorem}[section]
\newtheorem{lemma}[theorem]{Lemma}
\newtheorem{corollary}[theorem]{Corollary}
\newtheorem*{claim}{Claim}
\newtheorem{conjecture}[theorem]{Conjecture}
\newtheorem{problem}[theorem]{Problem}
\theoremstyle{definition} 
\newtheorem{definition}[theorem]{Definition} 
\theoremstyle{remark} 
\newtheorem{remark}[theorem]{Remark}
\newtheorem{example}[theorem]{Example}
\DeclareMathOperator{\Inc}{Inc}
\DeclareMathOperator{\Sq}{S}
\DeclareMathOperator{\B}{B}
\newcommand{\N}{\mathbb{N}}
\newcommand{\Z}{\mathbb{Z}}
\newcommand{\ub}{\mathbf{u}}
\newcommand{\vb}{\mathbf{v}}
\newcommand{\wb}{\mathbf{w}}
\newcommand{\cinv}{combinatorial $\Inc$-invariant}
\title{A Kruskal-Katona type result and applications}
\author{Dinh Van Le}
\address{Institut f\"ur Mathematik, Universit\"at Osnabr\"uck, 49069 Osnabr\"uck, Germany}
\email{dlevan@uos.de}
\author{Tim R\"{o}mer}
\address{Institut f\"ur Mathematik, Universit\"at Osnabr\"uck, 49069 Osnabr\"uck, Germany}
\email{troemer@uni-osnabrueck.de}
\begin{document}

\begin{abstract}
Inspired by the Kruskal-Katona theorem a minimization problem is studied, where the role of the shadow is replaced by the image of the action of the monoid of increasing functions. One of our main results shows that compressed sets are a solution to this problem. Several applications to simplicial complexes are discussed.
\end{abstract}
\maketitle
\section{Introduction}
The celebrated Kruskal-Katona theorem (independently discovered by Sch\"{u}tzenberger \cite{Sc}, Kruskal \cite{Kr}, Katona \cite{Kat}, Harper \cite{Ha}, and Clements--Lindstr\"{o}m \cite{CL}) solves the \emph{shadow minimization problem}: for a finite family $\mathcal{F}\subseteq \binom{\N}{d}$ of $d$-sets of positive integers, the \emph{shadow} $\partial\mathcal{F}$ of $\mathcal{F}$ is of minimal possible size if $\mathcal{F}$ is a \emph{compressed} set (see Section \ref{sec2} for definitions and more details).

This result has many important consequences, generalizations, and related results; see, e.g., \cite{An, GK,HH11}. For example, the numerical version of the theorem yields a characterization of possible face numbers of simplicial complexes, whereas its algebraic version characterizes possible Hilbert functions of standard graded exterior algebras over a field \cite[Chapter 6]{HH11}. Note that possible Hilbert functions of standard graded commutative algebras over a field are described by Macaulay's classical theorem \cite{Ma}. Among generalizations of the Kruskal-Katona theorem, perhaps the most prominent one was given by Clements and Lindstr\"{o}m \cite{CL}, who extended the result to multisets (see, in particular, \cite[Theorem 9.1.1, Corollary 9.2.3]{An} for a shadow and a so-called shade version). In fact, Clements-Lindstr\"{o}m's result also contains Macaulay's one as a special case.

Inspired by these theorems, we study a similar minimization problem,
where the role of $\partial\mathcal{F}$ is replaced by the image of
$\mathcal{F}$ under a monoid action of interest. More precisely, consider the monoid of increasing functions on $\N$:
\[
\Inc = \{\pi \colon \N \to \N \mid \pi(j)<\pi(j+1)\ \text{ for all }\ j\ge 1\}
\]
and the following subset of it:
\[
\Inc_1= \{\pi \in \Inc \mid \pi(j)\le j+1\ \text{ for all }\ j\ge 1\}.
\]
We write a $d$-set $\ub\in\binom{\N}{d}$ in the form $\ub=(u_1,\ldots,u_d)$ with $u_1<\cdots <u_d$ and we let a function $\pi\in\Inc_1$ act on $\ub$ as follows
\begin{equation}
\label{action}
 \pi(\ub)=(\pi(u_1),\ldots,\pi(u_d))\in \binom{\N}{d}.
\end{equation}
Now define the \emph{$\Inc$-image} of a family $\mathcal{F}\subseteq \binom{\N}{d}$ to be
\[
 \Inc(\mathcal{F})=\{\pi(\ub)\mid \ub\in \mathcal{F},\ \pi \in \Inc_1\}\subseteq \binom{\N}{d}.
\]
Observe that $\Inc(\mathcal{F})$ is a finite set if $\mathcal{F}$ has this property (see \Cref{sec3}). We are then interested in the following minimization problem:
\begin{problem}
\label{pb}
Among all finite families $\mathcal{F}$ of $d$-subsets of $\N$ with $|\mathcal{F}|$ fixed, find a family with minimal $|\Inc(\mathcal{F})|$.
\end{problem}

It is worth noting that the monoid $\Inc$ plays a crucial role in the study of ideals in infinite dimensional polynomial rings that are invariant under the action of the infinite symmetric group; see, e.g., \cite{AH07,Dr14,HS12,LNNR,LNNR2,NR17,NR17+}. Such ideals have recently attracted great attention because they arise naturally in various areas of mathematics, including
algebraic chemistry \cite{Dr10}, algebraic statistics \cite{BD11,DEKL15,DK14,HM13,HS07,SS03},
group theory \cite{Co67}, and
representation theory \cite{CEF, PS, SS-14,SS-16}. Moreover, modules with $\Inc$-action are interesting on their own right. Recently, the representation theory of these modules has been studied extensively in \cite{GS}.
Another motivation for studying \Cref{pb} is described in the following. For any family $\mathcal{F}$ of subsets of $\N$ define
\[
 \Inc(\mathcal{F})=\bigcup_{d\ge 1}\Inc(\mathcal{F}_d),
\]
where $\mathcal{F}_d$ is the subset  of $\mathcal{F}$ consisting of all $d$-sets. For each $n\ge 1$ let $\Delta_n$ be a simplicial complex on the vertex set $[n]:=\{1,\dots,n\}$. Denote by $\mathcal{N}(\Delta_n)$ the set of non-faces of $\Delta_n$.
Then a chain of simplicial complexes $(\Delta_n)_{n\ge 1}$ is called \emph{$\Inc$-invariant} if
\[
\Inc(\mathcal{N}(\Delta_n))\subseteq \mathcal{N}(\Delta_{n+1})\quad\text{for all }\ n\ge 1.
\]
Equivalently, this amounts to saying that the chain of Stanley-Reisner ideals of the $\Delta_n$ is invariant under the action of the monoid $\Inc$. See, e.g., \cite{NR17} for definitions and more information on $\Inc$-invariant chains of ideals.
The study of Hilbert series and asymptotic behavior of $\Inc$-invariant chains of ideals stimulates the following problem (for more details on shifting theory, the reader is referred to \cite[Chapter 11]{HH11} and \cite{Ka}):

\begin{problem}
 Find a shifting operation $\mathcal{S}$ such that for any $\Inc$-invariant chain of simplicial complex $(\Delta_n)_{n\ge 1}$, the chain $(\mathcal{S}(\Delta_n))_{n\ge 1}$ is also $\Inc$-invariant.
\end{problem}

In \cite[Conjecture 7.2]{LNNR}, it is conjectured that the symmetric shifting operation is a solution to this problem. Based on computational evidence, we predict that the exterior shifting operation provides another solution:

\begin{conjecture}
 \label{conj}
 Let $(\Delta_n)_{n\ge 1}$ be an $\Inc$-invariant chain of simplicial complexes. If $\mathcal{S}$ is either the symmetric or exterior shifting operation, then $(\mathcal{S}(\Delta_n))_{n\ge 1}$ is also $\Inc$-invariant.
\end{conjecture}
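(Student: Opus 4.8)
We believe \Cref{conj} should be accessible by the following route, the last step of which is the genuine difficulty.

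The plan is first to translate the statement into commutative/exterior algebra. One checks that $\Inc_1$ consists of the identity together with the maps $\pi_k$ ($k\ge 1$) given by $\pi_k(j)=j$ for $j<k$ and $\pi_k(j)=j+1$ for $j\ge k$. Under the correspondence between non-faces and squarefree monomials, the action of $\pi_k$ on $\mathcal{N}(\Delta_n)$ is realized by the $K$-algebra embedding $\iota_k\colon R_n\to R_{n+1}$ sending $x_j\mapsto x_j$ for $j<k$ and $x_j\mapsto x_{j+1}$ for $j\ge k$, where $R_n=\bigwedge K^n$ in the exterior case and $R_n=K[x_1,\dots,x_n]$ in the symmetric case; here $\iota_{n+1}$ is the standard inclusion. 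Thus $(\Delta_n)_{n\ge 1}$ is $\Inc$-invariant if and only if $\iota_k(I_{\Delta_n})R_{n+1}\subseteq I_{\Delta_{n+1}}$ for all $n\ge 1$ and all $1\le k\le n+1$. In the exterior case $I_{\Delta_n^e}=\gin(I_{\Delta_n})$ for a generic coordinate change in $R_n=\bigwedge K^n$ and the relevant term order, while in the symmetric case $I_{\Delta_n^s}$ is obtained from $\gin(I_{\Delta_n})\subseteq K[x_1,\dots,x_n]$ by the standard squarefree operation; since the latter is itself a monomial operation that intertwines with the $\iota_k$, the symmetric case should follow from the exterior one together with Macaulay's theorem. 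So one is reduced to showing: if $(I_n)_{n\ge 1}$ is a chain of squarefree monomial ideals with $\iota_k(I_n)R_{n+1}\subseteq I_{n+1}$ for all $k$, then $\iota_k(\gin(I_n))R_{n+1}\subseteq\gin(I_{n+1})$ for all $k$.

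One half of this is straightforward. Since $\iota_k$ differs from the standard inclusion $\iota_{n+1}$ only by a coordinate permutation and $\gin$ is unchanged under coordinate permutations, $\gin(\iota_k(I_n)R_{n+1})=\gin(I_nR_{n+1})=\gin(I_n)R_{n+1}$, the last equality holding because in the (reverse lexicographic) order used the new variable is smallest and does not occur in $I_n$. Applying monotonicity of $\gin$ under inclusion to $\iota_k(I_n)R_{n+1}\subseteq I_{n+1}$ then gives $\gin(I_n)R_{n+1}\subseteq\gin(I_{n+1})$, which is exactly the case $k=n+1$ of the desired inclusion.

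The remaining cases $k\le n$ are the crux. For these, $\iota_k(\gin(I_n))R_{n+1}$ is a ``column shift'' of $\gin(I_n)R_{n+1}$ obtained by \emph{raising} variable indices, whereas $\gin(I_n)$ is only closed under \emph{lowering} them, so $\iota_k(\gin(I_n))R_{n+1}$ need not lie in $\gin(I_n)R_{n+1}$, and the argument above --- which forgets the relabeling done by $\iota_k$ --- says nothing. Nor can one repair this by choosing the generic coordinate changes for the $I_n$ coherently along the chain: the images of the maps $\iota_k$ are $n$-dimensional subspaces of $K^{n+1}$ in too many different positions for a single element of $GL_{n+1}$ to restrict to a fixed generic element of $GL_n$ on all of them. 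So $\gin(I_{n+1})$ must be shown ``large enough'' to absorb every column shift $\iota_k(\gin(I_n))R_{n+1}$, and this largeness has to be extracted from the chain structure of $(I_n)$. This is where the Kruskal-Katona type theorem above ought to enter: $\gin$ preserves Hilbert functions, so the chain of $f$-vectors of $(\Delta_n^e)$ is that of $(\Delta_n)$, which is constrained by the minimality of $|\Inc(\mathcal{F})|$ for compressed $\mathcal{F}$; one would then want the rigidity statement that, for a chain of \emph{shifted} complexes, the property $\Inc(\mathcal{N}(\Gamma_n))\subseteq\mathcal{N}(\Gamma_{n+1})$ depends only on the $f$-vectors --- equivalently, is insensitive to which shifted complexes realizing those $f$-vectors are chosen --- from which $\Inc$-invariance of $(\Delta_n^e)$ follows, it being shifted with the same $f$-vectors as the compressed chain. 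Proving this rigidity statement (false for arbitrary complexes, but plausible for shifted ones, where membership is governed by the componentwise order) is the main obstacle. An alternative is to prove the analogue of \Cref{conj} first for iterated combinatorial shifting --- where each elementary shift is a compression, so the theorem applies directly once the shifts on consecutive complexes are performed in a common, $n$-independent order --- and then transfer to algebraic shifting via Kalai's comparison of the two shiftings; the trouble there is that the known comparisons are one-sided and do not by themselves transport $\Inc$-invariance from the combinatorially shifted chain to $(\Delta_n^e)$ or $(\Delta_n^s)$.
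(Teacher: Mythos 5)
This statement is a conjecture in the paper, not a theorem: the authors offer no proof, only computational evidence (and, for the symmetric case, a pointer to \cite[Conjecture 7.2]{LNNR}), so there is no argument of theirs to compare yours against. Your proposal, by your own account, is also not a proof: the reduction to showing $\iota_k(\gin(I_n))R_{n+1}\subseteq\gin(I_{n+1})$ is a sensible reformulation, and the case $k=n+1$ (monotonicity of $\gin$ plus $\gin(I_nR_{n+1})=\gin(I_n)R_{n+1}$ in reverse lex with the new variable smallest) is fine, but the cases $k\le n$ --- which you correctly identify as the crux --- are exactly the open content of the conjecture, and neither of your two suggested routes closes them. The ``rigidity'' statement you would need (that for chains of \emph{shifted} complexes the condition $\Inc(\mathcal{N}(\Gamma_n))\subseteq\mathcal{N}(\Gamma_{n+1})$ depends only on the $f$-vectors) is almost certainly false as stated: containment of one shifted family in another is not governed by cardinalities, in contrast to compressed families, where being initial segments of the squashed order makes containment equivalent to a cardinality inequality --- this is precisely why \Cref{th33} and \Cref{numerical} work for compressed sets and why \Cref{thm:onemainresult} concerns the \emph{combinatorial} $\Inc$-invariance of the compressed realization, a notion the paper explicitly distinguishes from the algebraic one in your conjecture. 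Given shifted families with prescribed sizes, one can easily choose a shifted $\Gamma_{n+1}$ realizing the right $f$-vector whose non-faces miss part of $\Inc(\mathcal{N}(\Gamma_n))$; what would have to be shown is the much more specific fact that the particular shifted complexes produced by $\gin$ along the chain absorb the shifts $\iota_k$, and no mechanism for that is offered.

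Two further steps are asserted without justification. First, the claim that the symmetric case ``should follow from the exterior one together with Macaulay's theorem'' via the squarefree operation is not substantiated: symmetric algebraic shifting is not obtained from exterior shifting by a monomial operation intertwining the $\iota_k$, and indeed the symmetric case is recorded as a separate conjecture in \cite{LNNR}. Second, the fallback via combinatorial shifting runs into the obstruction you yourself note (the known comparisons between combinatorial and algebraic shifting are one-sided), and moreover the paper's \Cref{re} warns that elementary combinatorial shifts $S_i$ do not interact well with $\Inc$: there is in general no inclusion between $\Inc(S_i(\mathcal{F}))$ and $S_i(\Inc(\mathcal{F}))$, so even the combinatorial analogue of the conjecture is not a routine consequence of \Cref{th33}. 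In short, the statement remains open, and your proposal is a plan whose decisive step is missing.
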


If $(\Delta_n)_{n\ge 1}$ is an $\Inc$-invariant chain, then it \emph{stabilizes}, i.e. $\Inc(\mathcal{N}(\Delta_n))= \mathcal{N}(\Delta_{n+1})$ for $n\gg0$; see \cite[Theorem 3.6]{HS12}. Note that for any shifting operation $\mathcal{S}$ it holds that $|\mathcal{N}(\Delta_n)_d|=|\mathcal{N}(\mathcal{S}(\Delta_n))_d|$ for all $d\ge 1$. So if \Cref{conj} is true, then we have
\begin{align*}
 |\Inc(\mathcal{N}(\mathcal{S}(\Delta_n)_d)|\le |\mathcal{N}(\mathcal{S}(\Delta_{n+1}))_d|=|\mathcal{N}(\Delta_{n+1})_d|
 =|\Inc(\mathcal{N}(\Delta_n)_d)|
\end{align*}
for all $n\gg0$ and $d\ge 1$. Thus,
it is natural to ask when $|\Inc(\mathcal{N}(\Delta_n)_d)|$ is minimal given that $|\mathcal{N}(\Delta_n)_d|$ is fixed, which leads directly to \Cref{pb}.

In the spirit of the Kruskal-Katona theorem, our first main result solves \Cref{pb}:

\begin{theorem}
\label{main}
Let $\mathcal{F}\subseteq \binom{\N}{d}$ be a finite compressed set. Then for any $\mathcal{F}'\subseteq \binom{\N}{d}$ with the property $|\mathcal{F}'|=|\mathcal{F}|$ one has $|\Inc(\mathcal{F}')|\ge|\Inc(\mathcal{F})|$.
\end{theorem}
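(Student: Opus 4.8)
The plan is to move the problem onto the lattice $\N_0^d$ (with $\N_0=\{0,1,2,\dots\}$), where $\Inc$ becomes the operation of adding a simplex, and then to run a compression argument in the spirit of Kruskal--Katona. First, a short computation with the defining inequality $\pi(j)\le j+1$ (which forces $u_i\le\pi(u_i)\le u_i+1$ for $\pi\in\Inc_1$, with $i\mapsto\pi(u_i)-u_i$ nondecreasing) gives
\[
\Inc_1\cdot\ub=\{\vb_t(\ub):0\le t\le d\},\qquad \vb_t(\ub)=(u_1,\dots,u_t,\,u_{t+1}+1,\dots,u_d+1),
\]
so $\vb_d(\ub)=\ub$ and $\Inc(\mathcal F)=\mathcal F\cup\Gamma(\mathcal F)$, where $\Gamma(\mathcal F)=\{\vb_t(\ub):\ub\in\mathcal F,\ t\le d-1\}$ is obtained by shifting a nonempty final block of each set up by one. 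Encoding $\ub$ by its gap vector $\gamma(\ub)=(u_1-1,u_2-u_1-1,\dots,u_d-u_{d-1}-1)\in\N_0^d$ gives a bijection $\binom{\N}{d}\to\N_0^d$, and one checks that $\gamma(\vb_t(\ub))=\gamma(\ub)+\mathbf e_{t+1}$ for $t\le d-1$ and $\gamma(\vb_d(\ub))=\gamma(\ub)$. Hence, writing $A=\gamma(\mathcal F)$ and $S=\{\mathbf 0,\mathbf e_1,\dots,\mathbf e_d\}$, we get $\gamma(\Inc(\mathcal F))=A+S=A\cup\partial^+A$, the union of $A$ with its upper shadow in $\N_0^d$, so $|\Inc(\mathcal F)|=|A+S|$; and a compressed family $\mathcal F$ corresponds under $\gamma$ to a ``greedy'' order ideal $A$ of $\N_0^d$ (all lattice points of small $\ell_1$-norm, together with a lexicographic segment of the first incomplete norm level).

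Now I would compress in two stages. Stage one: for each coordinate direction $\ell$, replace $A$ by its compression $C_\ell A$ in direction $\ell$, i.e. replace each one-dimensional fiber by the initial interval of the same length; iterating over all $\ell$ produces an order ideal $A^{*}$ with $|A^{*}|=|A|$, and --- the key point --- each such step does not increase $|A+S|$, because every one-dimensional fiber of the simplex $S$ is itself an initial interval. Stage two: among order ideals of $\N_0^d$ of a given size, the greedy ideal minimizes $|A+S|=|A|+|\partial^+A\setminus A|$; for an order ideal, $|\partial^+A\setminus A|$ is a sum over norm levels, the level-$m$ term being the size of the upper shadow of the level-$m$ part of $A$ (monomials of degree $m$ mapped to degree $m+1$), and the facts that (i) within one degree the shadow is minimized by a lexicographic segment (Macaulay) and (ii) it is optimal to fill lower levels completely before opening a higher one together single out the greedy ideal. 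Chaining the stages, for any $\mathcal F'$ with $|\mathcal F'|=|\mathcal F|$, putting $A'=\gamma(\mathcal F')$ and letting $A^{*}$ be its compression,
\[
|\Inc(\mathcal F')|=|A'+S|\ \ge\ |A^{*}+S|\ \ge\ |A+S|\ =\ |\Inc(\mathcal F)|.
\]

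The crux is the monotonicity in stage one: compressing $A$ in a coordinate direction must not increase $|A+S|$. This is the kind of sumset-compression statement that holds whenever the summand $S$ is itself compressed in that direction, but it genuinely requires proof --- the naive estimate ``each fiber of $A+S$ shrinks'' fails, since compression rearranges fibers instead of shrinking them one by one, so one has to compare $A+S$ and $(C_\ell A)+S$ fiber by fiber and exhibit, within each group of neighbouring fibers, an explicit injection. This local analysis --- together with verifying that the paper's notion of ``compressed'' really corresponds under $\gamma$ to the greedy order ideal (equivalently, that colex order on $d$-sets matches the lattice order used in stage two) --- is where the real content sits; the rest is bookkeeping. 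One could instead stay with $d$-sets and use the classical compressions $C_{ij}$ (replace a value $j$ by a smaller value $i$ inside the members of $\mathcal F$); the two stages still work, but proving $|\Inc(C_{ij}\mathcal F)|\le|\Inc(\mathcal F)|$ then demands a more delicate case analysis of how $C_{ij}$ interacts with the suffix-shift maps $\vb_t$.
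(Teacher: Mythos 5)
Your translation to $\N_0^d$ is correct and pleasant: the gap-vector bijection does turn $\Inc(\mathcal F)$ into the sumset $A+S$ with $S=\{\mathbf 0,\mathbf e_1,\dots,\mathbf e_d\}$, and the compressed families do correspond to the ``greedy'' ideals whose levels are lex segments (the check you flag works out, after reversing the variable order). But you have mislocated the crux. Stage one is actually the easy part and the ``naive'' fiberwise estimate does go through: writing $A_{y'}$ for the fiber of $A$ over $y'$ in direction $\ell$, the fibers of $S$ are the initial intervals $\{0,1\}$ (over $\mathbf 0$) and $\{0\}$ (over the $\mathbf e_i$, $i\neq\ell$), so the fiber of $C_\ell A+S$ over $y'$ is a union of initial intervals of size $\max\{|A_{y'}|+1\ (\text{if }A_{y'}\neq\emptyset),\ \max_{i\neq\ell}|A_{y'-\mathbf e_i}|\}$, and each of these numbers is trivially a lower bound for $|(A+S)_{y'}|$; no regrouping of fibers or injections are needed, and iterating the $C_\ell$ terminates in an order ideal.

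The genuine gap is stage two, claim (ii). For a nonempty order ideal $A$ one indeed has $|A+S|=1+\sum_m|\partial^+(A_m)|$, and Macaulay bounds each summand from below by $f_m(|A_m|)$, the minimal upper shadow of $|A_m|$ monomials of degree $m$ in $d$ variables. But ``it is optimal to fill lower levels completely before opening a higher one'' is not a consequence of (i), and as a purely numerical statement about minimizing $1+\sum_m f_m(k_m)$ over profiles $(k_m)$ of fixed sum it is \emph{false}: for $d=3$ and $N=7$ the greedy value is $\Inc^{[3]}(7)=1+3+6+6=16$, whereas the profile $k_0=1$, $k_5=6$ gives $1+f_0(1)+f_5(6)=1+3+10=14$, since the six lex-largest quintics form $x_1^3\cdot(\text{all quadrics})$, whose upper shadow is $x_1^3\cdot(\text{all cubics})$ of size $10$. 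So your claim (ii) is true only because an order ideal cannot skip levels, i.e.\ one must exploit the Macaulay coupling between $|A_m|$ and $|A_{m+1}|$, and proving the resulting cumulative statement (the greedy profile minimizes $\sum_m f_m(k_m)$ among Macaulay-admissible profiles, or equivalently the greedy ideal minimizes $|A+S|$ among order ideals) is a Clements--Lindstr\"om-type argument in its own right --- comparable in weight to the final step of the paper's proof, which handles exactly this residual comparison by a minimal-counterexample exchange (replace the maximal member of the left- and right-compressed family by the smallest missing member and show the $\Inc$-image does not grow). Until you supply that argument, the proof is incomplete; ``the rest is bookkeeping'' is not accurate.
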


Our proof is based on a variation of the compression technique established for multisets by Clements and Lindstr\"{o}m \cite{CL} (which stems from the idea of compression introduced by Lindstr\"{o}m and Zetterstr\"{o}m \cite{LZ} in another context). Note that the combinatorial shifting technique is not applicable to prove this theorem; see \Cref{re}.

\Cref{main} leads to some interesting consequences about simplices complexes. In particular, as the second main result we obtain in \Cref{thm:onemainresult} a characterization of possible $f$-vectors of chains of simplicial complexes that are \emph{combinatorial} invariant under the action of the monoid $\Inc$ (see \Cref{sec4} for the definition of this notion).

The paper is organized as follows. In \Cref{sec2} we recall the Kruskal-Katona theorem and some necessary notions from extremal finite set theory. \Cref{main} and its numerical version (\Cref{numerical}) are proved in \Cref{sec3}. 
In \Cref{sec4}, the results in \Cref{sec3} are applied to simplicial complexes. 
Finally, some open problems are proposed in \Cref{sec5}.

\section{Preliminaries}
\label{sec2}

In this section we collect some basic notions and facts in the theory of finite sets, referring the reader to \cite{An} for more details. Let $d$ be a positive integer. Denote by $\binom{\N}{d}$ the set of all $d$-subsets of $\N$. We write each element $\ub\in\binom{\N}{d}$ in the form $\ub=(u_1,\ldots,u_d)$ with $u_1<\cdots <u_d$.
The \emph{squashed order} on $\binom{\N}{d}$ is defined as follows: For $\ub,\vb\in\binom{\N}{d}$ we set $\ub<_{\Sq}\vb$ (or $\ub<\vb$ for short) if the largest element of the symmetric difference $(\ub\backslash\vb)\cup(\vb\backslash\ub)$ belongs to $\vb$. So for instance, some smallest $3$-subsets of $\N$ in this order are
\[
  (1,2,3)<(1,2,4)<(1,3,4)<(2,3,4)<(1,2,5)<(1,3,5)
   <(2,3,5)<\cdots.
\]
Observe that $<_{\Sq}$ is a well-ordering on $\binom{\N}{d}$. In the sequel, unless otherwise stated, $\binom{\N}{d}$ will be ordered by $<_{\Sq}$.

A finite family $\mathcal{F}\subseteq \binom{\N}{d}$ is said to be {\em compressed} if $\mathcal{F}$ is an initial segment of $\binom{\N}{d}$ with respect to $<_S$, i.e., $\mathcal{F}$ consists of the $|\mathcal{F}|$ smallest elements of $\binom{\N}{d}$.
For $\ub\in \binom{\N}{d}$ set
$$C(\ub)=\Big\{\vb\in \binom{\N}{d}\mid \vb\le \ub\Big\}.$$
It is apparent that $C(\ub)$ is a finite compressed set. Conversely, if $\mathcal{F}\subseteq \binom{\N}{d}$ is a finite compressed set, then $\mathcal{F}=C(\ub)$ with $\ub=\max \mathcal{F}$.

For any finite family $\mathcal{F}\subseteq \binom{\N}{d}$ there exists a unique compressed family $\mathcal{C}(\mathcal{F})\subseteq \binom{\N}{d}$ such that $|\mathcal{C}(\mathcal{F})|=|\mathcal{F}|$. We call $\mathcal{C}(\mathcal{F})$ the {\em compression} of $\mathcal{F}$. More generally, let $\mathcal{A}\subseteq \binom{\N}{d}$ and let $\mathcal{F}\subseteq\mathcal{A}$ be a finite subset. Then one can define the {\em compression} $\mathcal{C}_{\mathcal{A}}(\mathcal{F})$ of $\mathcal{F}$ \emph{in $\mathcal{A}$} to be the subset consisting of the $|\mathcal{F}|$ smallest elements of $\mathcal{A}$ (in the squashed order). In particular, for any $k\in\N$ we will use the notation $\mathcal{C}_{>k}(\mathcal{F})$ to denote the compression of $\mathcal{F}$ in $\binom{\N_{>k}}{d}$, where $\N_{>k}$ is the set of integers bigger than $k$.

The Kruskal-Katona theorem asserts that
\[
 |\partial\mathcal{F}|\ge|\partial\mathcal{C}(\mathcal{F})|\quad\text{for every finite family }\ \mathcal{F}\subseteq \binom{\N}{d},
\]
where the \emph{shadow} $\partial\mathcal{F}$ of $\mathcal{F}$ is defined as
\[
 \partial\mathcal{F}=\{\ub\setminus\{i\}\mid \ub\in \mathcal{F},\ i\in\ub\}\subseteq \binom{\N}{d-1}.
\]
Note that $\partial\mathcal{C}(\mathcal{F})$ is a compressed family; see, e.g., \cite[Theorem 7.5.1]{An}. So the Kruskal-Katona theorem in fact implies a stronger result
\[
 \partial\mathcal{C}(\mathcal{F})\subseteq\mathcal{C}(\partial\mathcal{F})\quad\text{for every finite family }\ \mathcal{F}\subseteq \binom{\N}{d}.
\]

Let us now recall a numerical version of the Kruskal-Katona theorem. Given a positive integer $d$, it is known that any positive integer $m$ has a unique \emph{$d$-binomial representation}:
\begin{equation}
\label{binom}
 m=\binom{a_d}{d}+\binom{a_{d-1}}{d-1}+\cdots+\binom{a_s}{s},
\end{equation}
where $a_d>a_{d-1}>\cdots>a_s\ge s\ge 1$; see, e.g., \cite[Theorem 7.2.1]{An}. Using this expansion we define
\[
 \partial_d(m)=\binom{a_d}{d-1}+\binom{a_{d-1}}{d-2}+\cdots+\binom{a_s}{s-1}.
\]
Also let $\partial_d(0)=0$. Then one can show that for any finite compressed family $\mathcal{F}\subseteq \binom{\N}{d}$ it holds that
\[
 |\partial\mathcal{F}|=\partial_d(|\mathcal{F}|)
\]
(see, e.g., \cite[p. 119]{An}). So the Kruskal-Katona theorem can be equivalently stated as follows: If $\mathcal{F}\subseteq \binom{\N}{d}$ is a finite family, then
\[
 |\partial\mathcal{F}|\ge\partial_d(|\mathcal{F}|).
\]
Note that this version of the theorem yields the well-known characterization of all possible $f$-vectors of simplicial complexes (see \Cref{sec4}).

We conclude this section with the notion of shifted families. Consider the {\em Borel order} $\le_{\B}$ on $\binom{\N}{d}$ defined by
$$\ub\le_{\B} \vb\quad\text{if}\quad u_i\le v_i\quad \text{for all }\ i=1,\ldots,d.$$
For $\ub\in\binom{\N}{d}$ set
$$B(\ub)=\Big\{\vb\in \binom{\N}{d}\mid \vb\le_{\B} \ub\Big\}.$$
A family $\mathcal{F}\subseteq \binom{\N}{d}$ is said to be {\em shifted} if $\vb\in \mathcal{F}$ whenever there exists $\ub\in \mathcal{F}$ with $\vb\le_{\B} \ub$. It is clear that $B(\ub)$ is shifted. More generally, $\bigcup_{i=1}^sB(\ub_i)$ is shifted for any $\ub_1,\ldots,\ub_s$ in $\binom{\N}{d}$. Conversely, if $\mathcal{F}\subseteq \binom{\N}{d}$ is a finite shifted family, then $\mathcal{F}=\bigcup_{i=1}^sB(\ub_i)$, where $\ub_1,\ldots,\ub_s$ are the maximal elements of $\mathcal{F}$.
Evidently, if $\ub\le_{\B} \vb$, then $\ub\le_{\Sq} \vb$. Hence every compressed family is shifted. The converse is of course not always true. For example, the family $\mathcal{F}=\{(1,2),(1,3),(1,4)\}$ is shifted but not compressed in $\binom{\N}{2}$.

\section{Proof of Theorem 1.4}
\label{sec3}

This section is devoted to the proof of \Cref{main}.
First, we compute the $\Inc$-image of an element. Note that if $\pi\in \Inc_1$, then there exists $i\in \N\cup\{\infty\}$ such that $\pi=\pi_i$, where
\[
 \pi_i(j)=
 \begin{cases}
  j&\text{for } 1\le j\le i-1,\\
  j+1&\text{for } j\ge i.
 \end{cases}
\]
Now let $\ub=(u_1,u_2,\ldots,u_d)\in \binom{\N}{d}$. It is clear that $\pi_i(\ub)=\ub$ for all $i\ge u_d+1$. Thus,
\begin{equation}
 \label{Inc}
 \begin{aligned}
   \Inc(\ub)=\{&\pi(\ub)\mid \pi \in \Inc_1\}=\bigcup_{i=1}^{u_d+1}\{\pi_i(\ub)\}\\
   =\{&\ub,\;(u_1,u_2,\ldots,u_d+1),\;(u_1,u_2,\ldots,u_{d-1}+1,u_d+1),\ldots,\\
   & (u_1,u_2+1,\ldots,u_d+1),\;(u_1+1,u_2+1,\ldots,u_d+1)\}.
  \end{aligned}
\end{equation}
It follows that for any finite family $\mathcal{F}\subseteq \binom{\N}{d}$, its $\Inc$-image
is also a finite family, since
\[
 \Inc(\mathcal{F})=\bigcup_{\ub\in\mathcal{F}}\Inc(\ub)
\]

\begin{example}
 Let $\mathcal{F}=\{(1,2,4),(1,3,5)\}$. Then
 \[
  \Inc(\mathcal{F})=\mathcal{F}\cup\{(1,2,5),\;(2,3,5),\;(1,3,6),\;(1,4,6),\;(2,4,6)\}.
 \]
\end{example}

Next, we show that compressedness and shiftedness are preserved under the $\Inc$-action. For $\ub=(u_1,u_2,\ldots,u_d)\in \binom{\N}{d}$ set
 $\ub+\mathbf{1}=\pi_1(\ub)=(u_1+1,u_2+1,\ldots,u_d+1)$.

\begin{lemma}\label{lm33}
Let $\ub\in \binom{\N}{d}$. Then
$$\Inc(C(\ub))=C(\ub+\mathbf{1}).$$
\end{lemma}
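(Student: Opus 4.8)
The plan is to prove the two inclusions separately, exploiting the explicit description of $\Inc(\ub)$ in \eqref{Inc} together with the fact that $C(\ub)$ is an initial segment of the squashed order. For the inclusion $\Inc(C(\ub)) \subseteq C(\ub+\mathbf{1})$, I would first observe that it suffices to show $\Inc(\vb) \subseteq C(\ub+\mathbf{1})$ for every $\vb \le_{\Sq} \ub$, since $\Inc(C(\ub)) = \bigcup_{\vb \le \ub}\Inc(\vb)$. In turn, by \eqref{Inc} it is enough to check that $\pi_i(\vb) \le_{\Sq} \ub + \mathbf{1}$ for each $i$. The key monotonicity fact needed here is: if $\vb \le_{\Sq} \wb$ then $\pi_i(\vb) \le_{\Sq} \pi_i(\wb)$ for any $\pi_i \in \Inc_1$, and also $\pi_i(\wb) \le_{\Sq} \wb + \mathbf{1}$ for any $i$ (since $\pi_i$ increases each coordinate by $0$ or $1$, while $\pi_1$ increases every coordinate by $1$). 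Combining these, $\pi_i(\vb) \le_{\Sq} \pi_i(\ub) \le_{\Sq} \ub + \mathbf{1}$, which gives the first inclusion. The monotonicity of $\pi_i$ with respect to $<_{\Sq}$ is the one genuinely technical point; I would prove it by examining the largest element of the symmetric difference of $\vb$ and $\wb$ and tracking how $\pi_i$ acts on the coordinates at and above that position — $\pi_i$ is a strictly increasing map on $\N$, and applying a common strictly increasing map to the underlying sets does not disturb which set "wins" at the top of the symmetric difference.

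For the reverse inclusion $C(\ub + \mathbf{1}) \subseteq \Inc(C(\ub))$, I would argue that every $\wb \le_{\Sq} \ub + \mathbf{1}$ can be written as $\pi_i(\vb)$ for some $\vb \le_{\Sq} \ub$ and some $\pi_i \in \Inc_1$. Write $\wb = (w_1, \dots, w_d)$. Since $\wb \le_{\Sq} \ub + \mathbf{1}$ and every coordinate of $\ub + \mathbf{1}$ is at least $2$, the largest coordinate $w_d$ satisfies $w_d \le u_d + 1$. The natural candidate is to "lower" $\wb$ by subtracting $1$ from a terminal block of coordinates: if $w_1 = 1$, leave the leading $1$'s fixed; more precisely, let $i$ be the smallest index with $w_i \ge i + 1$ — i.e., the first place where $\wb$ departs from the minimal set $(1, 2, \dots, d)$ — set $\vb = \pi_i^{-1}(\wb)$ by subtracting $1$ from every coordinate $w_i, \dots, w_d$, so that $\wb = \pi_i(\vb)$, and check that $\vb \in \binom{\N}{d}$ and that $\vb \le_{\Sq} \ub$. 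The containment $\vb \in \binom{\N}{d}$ follows from the choice of $i$, and $\vb \le_{\Sq} \ub$ should follow because subtracting $1$ from a terminal block is an order-decreasing (indeed the left inverse of an order-increasing) operation, and $\wb \le_{\Sq} \ub + \mathbf{1} = \pi_1(\ub)$ together with a compatibility between $\pi_i^{-1}$ and $\pi_1^{-1}$ on initial segments.

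The step I expect to be the main obstacle is the reverse inclusion, specifically verifying that the chosen "descent" $\vb$ of $\wb$ actually satisfies $\vb \le_{\Sq} \ub$: one must be careful about the interaction between the index $i$ where $\wb$ leaves the diagonal and the structure of $\ub$, and about edge cases such as $\wb = (1, 2, \dots, d)$ (where $\vb = \wb$ and $\pi_i = \mathrm{id}$) or $\wb$ having several leading coordinates equal to their positions. I would handle this by a clean case analysis on the position of the top of the symmetric difference $(\wb \setminus (\ub+\mathbf 1)) \cup ((\ub + \mathbf 1) \setminus \wb)$ relative to $i$, reducing everything to the monotonicity lemma for $\pi_i$ proved in the first part; the forward inclusion, by contrast, should be essentially immediate once that lemma is in hand.
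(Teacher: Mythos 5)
Your plan is correct and follows essentially the same route as the paper's proof: the inclusion $\Inc(C(\ub))\subseteq C(\ub+\mathbf{1})$ via elementary monotonicity bounds giving $\pi_i(\vb)\le_{\Sq}\ub+\mathbf{1}$ for all $\vb\le_{\Sq}\ub$, and the reverse inclusion via the identical construction (subtract $1$ from the coordinates after the initial run $1,2,\dots,i-1$, with $\wb=(1,2,\dots,d)$ handled separately by $\pi_{d+1}$) followed by a case analysis, relative to $i$, on the largest position where $\wb$ differs from $\ub+\mathbf{1}$, which is precisely the paper's Case 1--3 argument. One caution: the verification $\vb\le_{\Sq}\ub$ does not literally reduce to monotonicity of $\pi_i$, since you only know $\pi_i(\vb)=\wb\le_{\Sq}\pi_1(\ub)$ rather than $\le_{\Sq}\pi_i(\ub)$, so the direct index comparison in that case analysis (as carried out in the paper) is what actually has to be done.
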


\begin{proof}
	Evidently, $\vb\le \ub+\mathbf{1}$ for all $\vb\in\Inc(\ub)$. It follows that $\Inc(C(\ub))\subseteq C(\ub+\mathbf{1}).$ For the reverse inclusion, suppose $\ub=(u_1,u_2,\ldots,u_d)$ and let  $\vb=(v_1,v_2,\ldots,v_d)\in C(\ub+\mathbf{1})$. We need to show that there exists $\vb'\in C(\ub)$ such that $\vb\in \Inc(\vb')$. Consider the following cases:
	
	{\em Case 1}: $v_1>1$. Then
	\[
	 \vb'=\vb-\mathbf{1}:=(v_1-1,v_2-1,\ldots,v_d-1)\in\binom{\N}{d}.
	\]
	Moreover, one has $\vb'\le\ub$ since $\vb\le\ub+\mathbf{1}$. Thus, $\vb'\in C(\ub)$. Now $\vb=\pi_1(\vb')\in \Inc(\vb')$ and we are done in this case.
	
	{\em Case 2}: $v_1=1$ and $v_{k}-1=v_{k-1}$ for all $k=1,\ldots,d$. Then $v_k=k$ for all $k=1,\ldots,d$. Since obviously $k\le u_k$ for all $k=1,\ldots,d$, one gets $\vb\le\ub$, i.e.,  $\vb\in C(\ub)$. So we may choose $\vb'=\vb$, and then
	$\vb=\pi_{d+1}(\vb)=\pi_{d+1}(\vb')\in \Inc(\vb')$.

	{\em Case 3}: $v_1=1$ and there exists some $k$ such that $v_{k}-1>v_{k-1}$. Let $i$ be the smallest such $k$. Then $v_k=k$ for $k=1,\ldots,i-1$, $v_k>k$ for $k\ge i$, and
	$$\vb'=(v_1,\ldots,v_{i-1},v_i-1,v_{i+1}-1,\ldots,v_d-1)\in\binom{\N}{d}.$$
        We show that $\vb'\le \ub$. Indeed, since $\vb\le\ub+\mathbf{1}$ there exists some $j$ such that $v_{j}< u_j+1$ and $v_k=u_k+1$ for $k\ge j+1$. From $v_{j+1}=u_{j+1}+1>u_j+1\ge j+1$ it follows that $j+1\ge i$. Thus,
        \[
         v_k'=v_k-1=u_k\quad\text{for all } k\ge j+1.
        \]
        If also $j\ge i$, then $v_j'=v_j-1<u_j$, and hence $\vb'\le \ub$. Otherwise, $j=i-1$ and
        \[
         v_k'=v_k=k\le u_k\quad\text{for all } k=1,\dots, j.
        \]
        So again $\vb'\le \ub$. Now we have $\vb=\pi_{v_i-1}(\vb')\in \Inc(\vb')$ and this concludes the proof.	
\end{proof}

\begin{lemma}\label{lm34}
Let $\ub\in \binom{\N}{d}$. Then
$$\Inc(B(\ub))=B(\ub+\mathbf{1}).$$
\end{lemma}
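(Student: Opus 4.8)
The plan is to follow the proof of \Cref{lm33}, with the squashed order replaced by the Borel order throughout; since $\le_{\B}$ is a coordinatewise comparison, the bookkeeping turns out to be even lighter than there.

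For the inclusion $\Inc(B(\ub)) \subseteq B(\ub + \mathbf{1})$, I would take an arbitrary $\wb \in \Inc(\vb')$ with $\vb' \in B(\ub)$ and invoke the explicit description in \eqref{Inc}: each coordinate $w_k$ of $\wb$ equals either $v'_k$ or $v'_k + 1$, so $w_k \le v'_k + 1 \le u_k + 1$, and hence $\wb \le_{\B} \ub + \mathbf{1}$.

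For the reverse inclusion I would fix $\vb = (v_1, \ldots, v_d) \in B(\ub + \mathbf{1})$ and exhibit $\vb' \in B(\ub)$ together with $\pi \in \Inc_1$ satisfying $\pi(\vb') = \vb$. Let $i$ be characterized by $v_k = k$ for $k < i$ and, when $i \le d$, $v_i \ne i$; thus $1 \le i \le d+1$, with $i = d+1$ occurring precisely when $\vb = (1, 2, \ldots, d)$. If $i = d+1$, then $\vb \le_{\B} \ub$ already because $u_k \ge k$, and $\vb = \pi_{d+1}(\vb)$, so one may take $\vb' := \vb$. If $i \le d$, put
\[
 \vb' := (v_1, \ldots, v_{i-1},\, v_i - 1,\, v_{i+1} - 1,\, \ldots,\, v_d - 1).
\]
Here $v_i \ge i + 1$ (since $v_i > v_{i-1} = i-1$ and $v_i \ne i$ when $i \ge 2$, while $v_1 \ge 2$ when $i = 1$), so $\vb'$ is a strictly increasing tuple of positive integers; moreover $v'_k = k \le u_k$ for $k < i$ and $v'_k = v_k - 1 \le u_k$ for $k \ge i$, giving $\vb' \le_{\B} \ub$; and finally $\pi_{v_i - 1}(\vb') = \vb$, because the coordinates of $\vb'$ that are $\ge v_i - 1$ are exactly the last $d - i + 1$ of them. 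Combining the two inclusions yields the claim.

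I do not anticipate a genuine obstacle: every $\pi_i$ automatically obeys $\pi_i(j) \le j+1$ and so belongs to $\Inc_1$, and, in contrast with \Cref{lm33}, the membership $\vb' \le_{\B} \ub$ is read off at once from the coordinatewise definition of $\le_{\B}$, with no need for the auxiliary index used there. The only point requiring attention is locating the break index $i$ correctly and checking that $\pi_{v_i - 1}$ shifts precisely the intended tail of $\vb'$.
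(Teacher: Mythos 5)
Your proof is correct and takes essentially the same route as the paper, which likewise reduces the reverse inclusion to the case analysis of \Cref{lm33} — using the same element $\vb'$ and the same maps $\pi_i$ — and observes that $\vb'\le_{\B}\ub$ is read off directly from the coordinatewise definition of the Borel order. Your merging of the paper's first and third cases into a single case via the break index $i$ is only a cosmetic streamlining of that argument.
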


\begin{proof}
The argument is analogous to the one used in the proof of \Cref{lm33}. First, one has $\Inc(B(\ub))\subseteq B(\ub+\mathbf{1})$ because $\vb\le_{\B} \ub+\mathbf{1}$ for all $\vb\in\Inc(\ub)$. To prove the reverse inclusion, we take $\vb\in B(\ub+\mathbf{1})$ and consider the three cases as in the proof of \Cref{lm33}. Since $\vb\le_{\B} \ub+\mathbf{1}$, it is easy to check that in any case the element $\vb'$ defined there satisfies $\vb'\le_{\B} \ub$. Hence, $\vb\in \Inc(\vb')\subseteq  \Inc(B(\ub)).$
\end{proof}

\begin{corollary}
\label{co34}
	If a finite family $\mathcal{F}\subseteq \binom{\N}{d}$ is shifted, then $\Inc(\mathcal{F})$ is also shifted.
\end{corollary}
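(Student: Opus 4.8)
The plan is to reduce the statement about arbitrary finite shifted families to the single-generator case already settled in \Cref{lm34}. Recall from \Cref{sec2} that a finite family $\mathcal{F}\subseteq \binom{\N}{d}$ is shifted if and only if it can be written as a finite union $\mathcal{F}=\bigcup_{i=1}^s B(\ub_i)$ (for instance with $\ub_1,\ldots,\ub_s$ the maximal elements of $\mathcal{F}$, though any generating set of this shape will do). Since the $\Inc$-image is defined elementwise, it distributes over unions: $\Inc(\mathcal{F})=\bigcup_{\ub\in\mathcal{F}}\Inc(\ub)=\bigcup_{i=1}^s\Inc(B(\ub_i))$. Now apply \Cref{lm34} to each summand to get $\Inc(B(\ub_i))=B(\ub_i+\mathbf{1})$, so that $\Inc(\mathcal{F})=\bigcup_{i=1}^s B(\ub_i+\mathbf{1})$.

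It then remains only to invoke the observation recorded in \Cref{sec2} that a finite union of sets of the form $B(\vb)$ is shifted; applying this to $\vb_i=\ub_i+\mathbf{1}$ shows $\Inc(\mathcal{F})$ is shifted, which is exactly the claim. I would write this out in three short sentences: decompose $\mathcal{F}$, push $\Inc$ through the union using \eqref{Inc}, and quote \Cref{lm34} together with the closure of shiftedness under such unions.

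I do not anticipate a real obstacle here: everything needed has been prepared, and the only point worth a moment's care is making sure the decomposition $\mathcal{F}=\bigcup_i B(\ub_i)$ is legitimate for a \emph{finite} shifted family — which it is, by the discussion at the end of \Cref{sec2}, since a finite shifted family has finitely many maximal elements and equals the union of the $B(\ub_i)$ over those. One could alternatively argue directly from the definition of shiftedness: given $\vb\in\Inc(\mathcal{F})$ and $\wb\le_{\B}\vb$, write $\vb=\pi(\ub)$ for some $\ub\in\mathcal{F}$, $\pi\in\Inc_1$, and then \Cref{lm34} applied to $B(\ub)\subseteq\mathcal{F}$ gives $\wb\in B(\ub+\mathbf{1})=\Inc(B(\ub))\subseteq\Inc(\mathcal{F})$. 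Either route is essentially immediate, so the corollary is really just a packaging of \Cref{lm34}.
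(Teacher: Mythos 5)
Your proposal is correct and follows essentially the same route as the paper: decompose the shifted family as $\mathcal{F}=\bigcup_{i=1}^s B(\ub_i)$, use that $\Inc$ distributes over unions, and apply \Cref{lm34} to each $B(\ub_i)$ to conclude $\Inc(\mathcal{F})=\bigcup_{i=1}^s B(\ub_i+\mathbf{1})$ is shifted. Your alternative elementwise argument is also fine, but it is only a minor repackaging of the same idea.
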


\begin{proof}
	Since $\mathcal{F}$ is shifted, it has the form
$$
\mathcal{F}=\bigcup_{i=1}^sB(\ub_i)\quad 
\text{ for some }\ \ub_1,\ldots,\ub_s\in \mathcal{F}.
$$
So from \Cref{lm34} it follows that
$$
\Inc(\mathcal{F})=\bigcup_{i=1}^s\Inc(B(\ub_i))=\bigcup_{i=1}^sB(\ub_i+\mathbf{1}).
$$
Hence, $\Inc(\mathcal{F})$ is also shifted.
\end{proof}

\Cref{main} follows from the following result:

\begin{theorem}\label{th33}
Let $\mathcal{F}\subseteq \binom{\N}{d}$ be a finite family of $d$-sets. Then
$$\Inc(\mathcal{C}(\mathcal{F}))\subseteq \mathcal{C}(\Inc(\mathcal{F})).$$
\end{theorem}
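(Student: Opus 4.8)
The plan is to reduce \Cref{th33} to \Cref{lm33} by a standard compression argument. Write $m=|\mathcal{F}|$, so $\mathcal{C}(\mathcal{F})=C(\ub)$ for the $m$-th smallest $d$-set $\ub$ in the squashed order; by \Cref{lm33}, $\Inc(\mathcal{C}(\mathcal{F}))=C(\ub+\mathbf 1)$, which is compressed. Since $\mathcal{C}(\Inc(\mathcal{F}))$ is by definition the initial segment of $\binom{\N}{d}$ of size $|\Inc(\mathcal{F})|$, the desired inclusion $C(\ub+\mathbf 1)\subseteq\mathcal{C}(\Inc(\mathcal{F}))$ is equivalent to the single numerical inequality $|C(\ub+\mathbf 1)|\le|\Inc(\mathcal{F})|$, i.e. $|\Inc(\mathcal{C}(\mathcal{F}))|\le|\Inc(\mathcal{F})|$. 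So everything comes down to showing that passing from $\mathcal{F}$ to its compression does not increase the size of the $\Inc$-image — which is exactly the content of \Cref{main}, phrased so that \Cref{th33} in turn implies it. Hence I must prove the inequality directly.

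First I would set up the compression machinery in the style of Clements--Lindstr\"om. For a fixed coordinate position, or more naturally here for a fixed ``level'' determined by the last entry, define a compression operator that, for each admissible value of $u_d$ (and recursively of the lower entries), replaces the members of $\mathcal{F}$ lying in the relevant ``slice'' by the initial segment of that slice of the same cardinality; concretely, using the notation $\mathcal{C}_{>k}$ already introduced, one compresses the sub-family $\{\ub\in\mathcal{F}: u_d=k+1 \text{ fixed}\}$ within $\binom{\N_{>k}}{d}$-type slices. I would check two things about one application of such a compression $\mathcal{C}'$: (a) it does not change $|\mathcal{F}|$, and (b) it does not increase $|\Inc(\mathcal{F})|$. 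Point (b) is the crux: because the $\Inc$-image of a single element $\ub$, computed in \eqref{Inc}, is a ``staircase'' of $d+1$ elements obtained by bumping up a terminal block of coordinates by $1$, one needs to see that $\Inc$ interacts well with the slice structure — that $\Inc$ of a compressed slice sits inside the compression of $\Inc$ of the original slice, at the level of each slice. This is where I would invoke, slice by slice, the one-variable analogue of \Cref{lm33}, namely that within a slice $\Inc(C_{\text{slice}}(\ub))=C_{\text{slice}}(\ub+\mathbf 1)$, together with the fact that compressed families are shifted and \Cref{co34}.

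Then I would run the usual iteration: starting from $\mathcal{F}$, repeatedly apply the compression operators for the various coordinate positions; since each step keeps $|\cdot|$ fixed, does not increase $|\Inc(\cdot)|$, and strictly decreases some fixed monovariant (for instance the sum over $\ub\in\mathcal{F}$ of the rank of $\ub$ in the squashed order) unless $\mathcal{F}$ is already fully compressed, the process terminates at $\mathcal{C}(\mathcal{F})$. Along the chain $\mathcal{F}\rightsquigarrow\mathcal{C}(\mathcal{F})$ the quantity $|\Inc(\cdot)|$ only goes down, giving $|\Inc(\mathcal{C}(\mathcal{F}))|\le|\Inc(\mathcal{F})|$, and by the reduction in the first paragraph this yields $\Inc(\mathcal{C}(\mathcal{F}))\subseteq\mathcal{C}(\Inc(\mathcal{F}))$.

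The main obstacle, as usual in Kruskal--Katona-type arguments, is step (b): verifying that a single coordinate compression does not increase $|\Inc(\mathcal{F})|$. Unlike the classical shadow, the operator $\Inc$ bumps several coordinates simultaneously, so an element of $\mathcal{F}$ and its compressed replacement can have $\Inc$-images that overlap in subtle ways with the $\Inc$-images of other members; one has to track, for each target slice, exactly which staircase elements land there and show the count cannot grow. I expect this to require a careful case analysis — essentially adapting the ``$\partial$ commutes with compression'' lemma of \cite{An,CL} to the $\Inc$-action using \Cref{lm33} and \Cref{lm34} as the local building blocks — and in particular one must handle separately the slices where the last coordinate is bumped and where it is not, mirroring the three cases in the proof of \Cref{lm33}. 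The remark after \Cref{main} (that combinatorial shifting does \emph{not} work) is a warning that this step is genuinely delicate and cannot be shortcut by a naive shifting reduction.
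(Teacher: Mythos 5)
Your reduction of the inclusion to the inequality $|\Inc(\mathcal{C}(\mathcal{F}))|\le|\Inc(\mathcal{F})|$ via \Cref{lm33} is exactly right, and your slice compressions (fixing the first or last entry and compressing the remaining $(d-1)$-sets) are the same operators the paper uses. The genuine gap is your termination claim: the iteration of these partial compressions does \emph{not} terminate at $\mathcal{C}(\mathcal{F})$. Its fixed points are the families that are compressed in every slice (left- and right-compressed), and this class is strictly larger than the class of compressed families; for instance $\{(1,2),(1,3),(1,4)\}$ is compressed in every slice but is not an initial segment of $\binom{\N}{2}$, and no further application of your operators changes it. No monovariant argument can push past such a fixed point, because each compression step literally does nothing there. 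Consequently your chain $\mathcal{F}\rightsquigarrow\cdots$ only reaches some left- and right-compressed family $\mathcal{F}^{(\infty)}$ with $|\Inc(\mathcal{F})|\ge|\Inc(\mathcal{F}^{(\infty)})|$, and you are still missing the comparison $|\Inc(\mathcal{C}(\mathcal{F}))|\le|\Inc(\mathcal{F}^{(\infty)})|$. This is where the paper does real additional work: it proves a separate claim that for any compressed $\mathcal{G}$ and any left- and right-compressed $\mathcal{H}$ with $|\mathcal{G}|=|\mathcal{H}|$ one has $|\Inc(\mathcal{G})|\le|\Inc(\mathcal{H})|$, by taking a minimal counterexample $\mathcal{H}$, exchanging $\ub=\max\mathcal{H}$ for $\vb=\min(\mathcal{G}\setminus\mathcal{H})$, showing $u_1<v_1$ and $\Inc(\vb)\setminus\{\vb+\mathbf{1}\}\subseteq\Inc(\mathcal{H}\setminus\{\ub\})$ so that the exchange does not increase the $\Inc$-image, and then re-running the compression process to contradict minimality. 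Without an argument of this kind your proof does not close.

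A secondary point: your step (b) — that one slice compression does not increase $|\Inc(\cdot)|$ — cannot be obtained from \Cref{lm33} and \Cref{co34} alone, because the slices $\widehat{\mathcal{F}}_{1,k}$ and $\widehat{\mathcal{F}}_{d,k}$ are arbitrary $(d-1)$-families, not single initial segments. What is actually needed inside each slice is the full statement of \Cref{th33} in dimension $d-1$, i.e. the whole proof must be organized as an induction on $d$ (this is the paper's \Cref{lm37}, which also needs the compatibility of $\Inc$ with the slice decomposition recorded in \Cref{compositions} and the translation trick identifying $\binom{\N_{>k}}{d-1}$ with $\binom{\N}{d-1}$). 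You gesture at this but do not set up the induction, so as written this step is also incomplete, though it is repairable along the lines just indicated; the termination issue in the first paragraph is the essential missing idea.
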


By \Cref{lm33}, $\Inc(\mathcal{C}(\mathcal{F}))$ is a compressed set. So the inclusion
\[
 \Inc(\mathcal{C}(\mathcal{F}))\subseteq \mathcal{C}(\Inc(\mathcal{F}))
\]
 is equivalent to
$$
|\Inc(\mathcal{C}(\mathcal{F}))|\le |\mathcal{C}(\Inc(\mathcal{F}))|=|\Inc(\mathcal{F})|.
$$
In particular, we see that Theorems \ref{main} and \ref{th33} are in fact equivalent.

In order to prove \Cref{th33}, we follow the idea of compression due to Clements and Lindstr\"{o}m \cite{CL}. Basically, for a finite family $\mathcal{F}\subseteq \binom{\N}{d}$ we will construct a shifted family $\mathcal{F}^{(\infty)}\subseteq \binom{\N}{d}$ such that
\[
 |\mathcal{F}|=|\mathcal{F}^{(\infty)}|\quad \text{and}\quad|\Inc(\mathcal{F})|\ge|\Inc(\mathcal{F}^{(\infty)})|\ge|\Inc(\mathcal{C}(\mathcal{F}))|.
\]
The construction of the family $\mathcal{F}^{(\infty)}$ is based on the process of taking partial compressions similar to the one used in \cite{CL}: at each step the compression is taken while keeping one coordinate fixed. But unlike in \cite{CL}, we consider here partial compressions that fix either the smallest or the largest element of each set in the family.

Let $\mathcal{F}\subseteq \binom{\N}{d}$ be a finite family. For each $k\geq 1$ set
\begin{align*}
  \mathcal{F}_{1,k}&=\{\ub=(u_1,\ldots,u_d)\in \mathcal{F} \mid u_1=k \}, &\mathcal{F}_{d,k}&=\{\ub=(u_1,\ldots,u_d)\in \mathcal{F} \mid u_d=k \},\\
  \widehat{\mathcal{F}}_{1,k}&=\Big\{\widehat{\ub}\in \binom{\N}{d-1}\mid (k,\widehat{\ub})\in \mathcal{F}_{1,k}\Big\}, &\widehat{\mathcal{F}}_{d,k}&=\Big\{\widehat{\ub}\in \binom{\N}{d-1}\mid (\widehat{\ub},k)\in \mathcal{F}_{d,k}\Big\}.
\end{align*}
Note that the family $\widehat{\mathcal{F}}_{1,k}$ is actually a subset of $\binom{\N_{>k}}{d-1}$. So one can form the compression $\mathcal{C}_{>k}(\widehat{\mathcal{F}}_{1,k})$ of $\widehat{\mathcal{F}}_{1,k}$ in $\binom{\N_{>k}}{d-1}$ (see \Cref{sec2}).
Since $\mathcal{F}$ is finite, note also that
$$
\mathcal{F}_{1,k}=\mathcal{F}_{d,k}=\widehat{\mathcal{F}}_{1,k}=\widehat{\mathcal{F}}_{d,k}=\emptyset\quad
\text{ for }\ k \gg 0.
$$

\begin{definition}
\label{compression}
 Using the above notation, we define
 \begin{enumerate}
  \item
  the {\em left partial compression} of $\mathcal{F}_{1,k}$ as
  $$\mathcal{C}^{(l)}(\mathcal{F}_{1,k})=\{(k,\widehat{\ub})\mid \widehat{\ub}\in \mathcal{C}_{>k}(\widehat{\mathcal{F}}_{1,k})\},$$
  and, more generally, the {\em left partial compression} of $\mathcal{F}$ as
  \begin{equation}
  \label{p1}
   \mathcal{C}^{(l)}(\mathcal{F})=\bigcup_{k\geq 1}\mathcal{C}^{(l)}(\mathcal{F}_{1,k}),
  \end{equation}

  \item
  the {\em right partial compression} of $\mathcal{F}_{d,k}$ as
  $$\mathcal{C}^{(r)}(\mathcal{F}_{d,k})=\{(\widehat{\ub},k)\mid \widehat{\ub}\in \mathcal{C}(\widehat{\mathcal{F}}_{d,k})\},$$
  and, more generally, the {\em right partial compression} of $\mathcal{F}$ as
  \begin{equation}
  \label{pd}
   \mathcal{C}^{(r)}(\mathcal{F})=\bigcup_{k \geq 1}\mathcal{C}^{(r)}(\mathcal{F}_{d,k}).
  \end{equation}
 \end{enumerate}
 The family $\mathcal{F}$ is called \emph{left-compressed} (respectively, \emph{right-compressed}) if
\[
\mathcal{F}=\mathcal{C}^{(l)}(\mathcal{F})\quad (\text{respectively, }\ \mathcal{F}=\mathcal{C}^{(r)}(\mathcal{F})).                                                                                                                                                                                                                                   \]
In other words, $\mathcal{F}$ is \emph{left-compressed} (respectively, \emph{right-compressed}) if and only if the family $\widehat{\mathcal{F}}_{1,k}$ is compressed in $\binom{\N_{>k}}{d-1}$ (respectively, $\widehat{\mathcal{F}}_{d,k}$ is compressed in $\binom{\N}{d-1}$) for every $k\geq 1$.
\end{definition}

\begin{example}
\label{ex1}
 Let $\mathcal{F}=\{(1,2,6),\;(1,3,5),\;(2,3,5),\;(3,5,6)\}\subseteq \binom{\N}{3}$. Then
 \begin{align*}
  \mathcal{F}_{1,1}&=\{(1,2,6),\;(1,3,5)\},\ \mathcal{F}_{1,2}=\{(2,3,5)\},\ \mathcal{F}_{1,3}=\{(3,5,6)\},\\
  \mathcal{F}_{3,5}&=\{(1,3,5),\;(2,3,5)\},\ \mathcal{F}_{3,6}=\{(1,2,6),\;(3,5,6)\}.
 \end{align*}
 By definition,
 \begin{align*}
  \mathcal{C}^{(l)}(\mathcal{F}_{1,1})&=\{(1,2,3),\;(1,2,4)\},\ \mathcal{C}^{(l)}(\mathcal{F}_{1,2})=\{(2,3,4)\},\ \mathcal{C}^{(l)}(\mathcal{F}_{1,3})=\{(3,4,5)\},\\
  \mathcal{C}^{(r)}(\mathcal{F}_{3,5})&=\{(1,2,5),\;(1,3,5)\},\ \mathcal{C}^{(r)}(\mathcal{F}_{3,6})=\{(1,2,6),\;(1,3,6)\}.
 \end{align*}
 Hence,
 \begin{align*}
  \mathcal{C}^{(l)}(\mathcal{F})&=\{(1,2,3),\;(1,2,4),\; (2,3,4),\; (3,4,5)\},\\
  \mathcal{C}^{(r)}(\mathcal{F})&=\{(1,2,5),\;(1,3,5),\; (1,2,6),\;(1,3,6)\}.
 \end{align*}

\end{example}

Some elementary properties of partial compressions are listed in the next lemma. To state the result we need to extend the squashed order to finite subsets of $\binom{\N}{d}$: for such two subsets $\mathcal{F},\mathcal{F}'$
we write
$$\mathcal{F}\le\mathcal{F}'$$
if the largest element (w.r.t.~the squashed order)
of $(\mathcal{F}\backslash\mathcal{F}')\cup (\mathcal{F}'\backslash\mathcal{F})$
belongs to $\mathcal{F}'$. Thus, a finite compressed set in $\binom{\N}{d}$ is minimal among all the subsets of $\binom{\N}{d}$ of its size.

\begin{lemma}
\label{properties}
 Let $\mathcal{F}\subseteq \binom{\N}{d}$ be a finite subset. Then:
 \begin{enumerate}
  \item
  For every $k\geq 1$, one has
  \[
   (\mathcal{C}^{(l)}(\mathcal{F}))_{1,k}=\mathcal{C}^{(l)}(\mathcal{F}_{1,k})\quad\text{and}\quad
   (\mathcal{C}^{(r)}(\mathcal{F}))_{d,k}=\mathcal{C}^{(r)}(\mathcal{F}_{d,k}),
  \]
  or in other words,
  \[
   \widehat{\mathcal{C}^{(l)}(\mathcal{F})}_{1,k}=\mathcal{C}_{>k}(\widehat{\mathcal{F}}_{1,k})\quad\text{and}\quad
   \widehat{\mathcal{C}^{(r)}(\mathcal{F})}_{d,k}=\mathcal{C}(\widehat{\mathcal{F}}_{d,k}).
  \]
  \item
  $|\mathcal{F}|=|\mathcal{C}^{(l)}(\mathcal{F})|=|\mathcal{C}^{(r)}(\mathcal{F})|$.

  \item
  $\mathcal{C}^{(l)}(\mathcal{F})\le \mathcal{F}$ and $\mathcal{C}^{(r)}(\mathcal{F})\le \mathcal{F}$.
 \end{enumerate}
\end{lemma}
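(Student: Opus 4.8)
The plan is to verify the three items in order, since each is a direct unwinding of the definitions in \Cref{compression}. For item (i), I would observe that the left partial compression only modifies elements according to their smallest coordinate: by \eqref{p1}, $\mathcal{C}^{(l)}(\mathcal{F})=\bigcup_{k\ge1}\mathcal{C}^{(l)}(\mathcal{F}_{1,k})$, and every element of $\mathcal{C}^{(l)}(\mathcal{F}_{1,k})$ has first coordinate $k$ while the sets $\mathcal{C}^{(l)}(\mathcal{F}_{1,k})$ for distinct $k$ are disjoint. Hence picking out those members of $\mathcal{C}^{(l)}(\mathcal{F})$ with first coordinate $k$ recovers exactly $\mathcal{C}^{(l)}(\mathcal{F}_{1,k})$, which is the first claimed equality; stripping off the first coordinate then gives $\widehat{\mathcal{C}^{(l)}(\mathcal{F})}_{1,k}=\mathcal{C}_{>k}(\widehat{\mathcal{F}}_{1,k})$. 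The argument for the right partial compression is identical with ``smallest'' replaced by ``largest'' and $\mathcal{C}_{>k}$ replaced by $\mathcal{C}$.

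For item (ii), I would use that compression in a fixed ambient set preserves cardinality: $|\mathcal{C}_{>k}(\widehat{\mathcal{F}}_{1,k})|=|\widehat{\mathcal{F}}_{1,k}|=|\mathcal{F}_{1,k}|$, and similarly $|\mathcal{C}^{(l)}(\mathcal{F}_{1,k})|=|\mathcal{C}_{>k}(\widehat{\mathcal{F}}_{1,k})|$ since prepending the coordinate $k$ is a bijection. Summing over $k$ (a finite sum, since $\mathcal{F}_{1,k}=\emptyset$ for $k\gg0$) and using that $\mathcal{F}=\bigsqcup_k\mathcal{F}_{1,k}$ gives $|\mathcal{F}|=|\mathcal{C}^{(l)}(\mathcal{F})|$; the same works for $\mathcal{C}^{(r)}$.

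For item (iii), I would compare $\mathcal{C}^{(l)}(\mathcal{F})$ with $\mathcal{F}$ using the extended squashed order. The key point is that the symmetric difference $(\mathcal{C}^{(l)}(\mathcal{F})\setminus\mathcal{F})\cup(\mathcal{F}\setminus\mathcal{C}^{(l)}(\mathcal{F}))$ decomposes according to first coordinate, since within a fixed first coordinate $k$ the operation $\mathcal{F}_{1,k}\mapsto\mathcal{C}^{(l)}(\mathcal{F}_{1,k})$ is, after stripping the coordinate $k$, just compression in $\binom{\N_{>k}}{d-1}$, which by the remark preceding the lemma satisfies $\mathcal{C}_{>k}(\widehat{\mathcal{F}}_{1,k})\le\widehat{\mathcal{F}}_{1,k}$. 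Prepending the same first coordinate $k$ to two $(d-1)$-sets preserves the squashed order, so $\mathcal{C}^{(l)}(\mathcal{F}_{1,k})\le\mathcal{F}_{1,k}$ for each $k$. Now let $\vb$ be the largest element of the total symmetric difference; it has some first coordinate $k_0$, and since elements of $\mathcal{C}^{(l)}(\mathcal{F})$ or $\mathcal{F}$ with a given first coordinate lie entirely in the corresponding piece, $\vb$ is the largest element of $(\mathcal{C}^{(l)}(\mathcal{F}_{1,k_0})\setminus\mathcal{F}_{1,k_0})\cup(\mathcal{F}_{1,k_0}\setminus\mathcal{C}^{(l)}(\mathcal{F}_{1,k_0}))$, whence $\vb\in\mathcal{C}^{(l)}(\mathcal{F}_{1,k_0})\subseteq\mathcal{C}^{(l)}(\mathcal{F})$; this gives $\mathcal{C}^{(l)}(\mathcal{F})\le\mathcal{F}$. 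The same reasoning with last coordinates gives $\mathcal{C}^{(r)}(\mathcal{F})\le\mathcal{F}$. The only mild subtlety — the main obstacle, such as it is — is checking that the squashed order is ``compatible'' with prepending (for left compression) or appending (for right compression) a fixed coordinate, i.e.\ that $\widehat{\ub}<_{\Sq}\widehat{\vb}$ in $\binom{\N}{d-1}$ implies $(k,\widehat{\ub})<_{\Sq}(k,\widehat{\vb})$ and $(\widehat{\ub},k)<_{\Sq}(\widehat{\vb},k)$ in $\binom{\N}{d}$; both follow immediately from the definition of the squashed order via the largest element of the symmetric difference, since adding a common coordinate changes neither symmetric difference.
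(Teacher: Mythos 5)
Your proposal follows the paper's proof: (i) and (ii) are the same direct unwinding of \Cref{compression} together with the disjointness of the unions over $k$, and for (iii) you reduce, exactly as the paper does, to the piecewise comparisons $\mathcal{C}^{(l)}(\mathcal{F}_{1,k})\le \mathcal{F}_{1,k}$ and $\mathcal{C}^{(r)}(\mathcal{F}_{d,k})\le \mathcal{F}_{d,k}$, coming from $\mathcal{C}_{>k}(\widehat{\mathcal{F}}_{1,k})\le \widehat{\mathcal{F}}_{1,k}$ and $\mathcal{C}(\widehat{\mathcal{F}}_{d,k})\le \widehat{\mathcal{F}}_{d,k}$; you merely make explicit the compatibility of the squashed order with adjoining a common coordinate and the decomposition of the symmetric difference by first (resp.\ last) coordinate, which the paper leaves implicit.

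There is one local slip in (iii). By the definition of the extended squashed order, $\mathcal{C}^{(l)}(\mathcal{F}_{1,k_0})\le \mathcal{F}_{1,k_0}$ means that the largest element of the symmetric difference of these two families lies in $\mathcal{F}_{1,k_0}$, the family on the \emph{right} of $\le$. Hence from the fact that $\vb$ is the largest element of that piece of the symmetric difference you should conclude $\vb\in \mathcal{F}_{1,k_0}\subseteq \mathcal{F}$, not $\vb\in \mathcal{C}^{(l)}(\mathcal{F}_{1,k_0})\subseteq \mathcal{C}^{(l)}(\mathcal{F})$ as written; taken literally, your step would establish $\mathcal{F}\le \mathcal{C}^{(l)}(\mathcal{F})$, the opposite of the assertion you then draw. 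With that membership corrected, the rest of your argument (the symmetric difference splits by first coordinate thanks to (i), and $\vb$ is the largest element of its piece) does yield $\mathcal{C}^{(l)}(\mathcal{F})\le \mathcal{F}$, and the same correction applies verbatim to the right-compressed case. So this is a one-line fix of a misapplied definition rather than a gap in the method.
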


\begin{proof}
 (i) is immediate from \Cref{compression}. From (i) we get (ii) since the unions in \eqref{p1} and \eqref{pd} as well as in $\mathcal{F}=\bigcup_{k\geq 1}\mathcal{F}_{1,k}=\bigcup_{k\geq 1}\mathcal{F}_{d,k}$ are disjoint. For (iii), it suffices to note that
 \[
  \mathcal{C}^{(l)}(\mathcal{F}_{1,k})\le \mathcal{F}_{1,k}\quad\text{and}\quad \mathcal{C}^{(r)}(\mathcal{F}_{d,k})\le \mathcal{F}_{d,k}\quad \text{for every } k\geq 1,
 \]
which follow easily from the obvious relations: $\mathcal{C}_{>k}(\widehat{\mathcal{F}}_{1,k})\le \widehat{\mathcal{F}}_{1,k}$ and $\mathcal{C}(\widehat{\mathcal{F}}_{d,k})\le \widehat{\mathcal{F}}_{d,k}$.
\end{proof}

We now use partial compressions to construct a shifted family from a given family of sets. For a finite family $\mathcal{F}\subseteq \binom{\N}{d}$ consider the following sequence of families in $\binom{\N}{d}$ which is obtained by alternatively applying the left and right partial compressions:
\begin{align*}
	&\mathcal{F}^{(0)}=\mathcal{F},\ \ \mathcal{F}^{(1)}=\mathcal{C}^{(l)}(\mathcal{F}^{(0)}),\ \ \mathcal{F}^{(2)}=\mathcal{C}^{(r)}(\mathcal{F}^{(1)}),\cdots\\
	&\mathcal{F}^{(2k-1)}=\mathcal{C}^{(l)}(\mathcal{F}^{(2k-2)}),\ \ \mathcal{F}^{(2k)}=\mathcal{C}^{(r)}(\mathcal{F}^{(2k-1)}),\cdots.
\end{align*}
According to \Cref{properties}(iii),
$$\mathcal{F}\ge \mathcal{F}^{(1)}\ge \mathcal{F}^{(2)}\ge\cdots\ge \mathcal{F}^{(2k-1)}\ge \mathcal{F}^{(2k)}\ge\cdots.$$
Since the squashed order is a well-order on $\binom{\N}{d}$, the previous sequence must stabilize, i.e. there exists some $j$ such that $\mathcal{F}^{(k)}=\mathcal{F}^{(j)}$ for all $k\ge j$. Let $\mathcal{F}^{(\infty)}$ denote this family of sets in the limit. Then
$$\mathcal{C}^{(l)}(\mathcal{F}^{(\infty)})=\mathcal{F}^{(\infty)}\quad \text{and}\quad \mathcal{C}^{(r)}(\mathcal{F}^{(\infty)})=\mathcal{F}^{(\infty)}.$$
Hence, $\mathcal{F}^{(\infty)}$ is left- and right-compressed. The next result shows that $\mathcal{F}^{(\infty)}$ is a shifted family.

\begin{lemma}\label{lm38}
	Let $d\ge 2$ and let $\mathcal{F}\subseteq \binom{\N}{d}$ be a finite family of $d$-sets. If $\mathcal{F}$ is left- and right-compressed, then $\mathcal{F}$ is shifted. In particular, $\mathcal{F}^{(\infty)}$ is always shifted.
\end{lemma}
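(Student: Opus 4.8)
The plan is to reduce shiftedness to closure under single-step ``down-moves'' in the Borel order, and then to realise each such down-move by deleting either the smallest or the largest coordinate of a set and invoking the matching partial-compressedness hypothesis.

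First I would record the following standard reformulation: a finite family $\mathcal{G}\subseteq\binom{\N}{d}$ is shifted if and only if it is closed under the covering relations of $\le_{\B}$, that is, whenever $\ub=(u_1,\dots,u_d)\in\mathcal{G}$ and an index $i$ is such that $\vb:=(u_1,\dots,u_{i-1},u_i-1,u_{i+1},\dots,u_d)$ is again a strictly increasing sequence of positive integers, then $\vb\in\mathcal{G}$. One direction is trivial; for the other I would show that for any $\vb<_{\B}\ub$ there is a $\wb$ obtained from $\ub$ by a single such down-move with $\vb\le_{\B}\wb<_{\B}\ub$ — namely, take the least index $j$ with $v_j<u_j$; when $j\ge 2$ the minimality of $j$ gives $u_{j-1}=v_{j-1}<v_j\le u_j-1$, so lowering the $j$-th coordinate of $\ub$ keeps one inside $\binom{\N}{d}$, and when $j=1$ one has $u_1\ge 2$ — and then induct on $\sum_i(u_i-v_i)$, which is finite.

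Since $\mathcal{F}^{(\infty)}$ is left- and right-compressed by construction, it then remains to prove that a left- and right-compressed finite family $\mathcal{F}$ is closed under these down-moves. So let $\ub\in\mathcal{F}$ and let $i$, $\vb$ be as above; here $d\ge 2$ guarantees that the coordinate being lowered is not the only one. If $i\ge 2$, the smallest entry $k:=u_1$ is unchanged, so $\widehat{\ub}=(u_2,\dots,u_d)\in\widehat{\mathcal{F}}_{1,k}$, and $\widehat{\vb}=(u_2,\dots,u_{i-1},u_i-1,u_{i+1},\dots,u_d)$ still lies in $\binom{\N_{>k}}{d-1}$ (strict monotonicity is clear, and $u_i-1>u_{i-1}\ge u_1=k$), with $\widehat{\vb}\le_{\B}\widehat{\ub}$, hence $\widehat{\vb}\le_{\Sq}\widehat{\ub}$; as $\widehat{\mathcal{F}}_{1,k}$ is an initial segment of $\binom{\N_{>k}}{d-1}$ containing $\widehat{\ub}$, this forces $\widehat{\vb}\in\widehat{\mathcal{F}}_{1,k}$, i.e. $\vb=(k,\widehat{\vb})\in\mathcal{F}$. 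If $i=1$, the largest entry $k:=u_d$ is unchanged, and the symmetric argument with $\widehat{\ub}=(u_1,\dots,u_{d-1})$, $\widehat{\vb}=(u_1-1,u_2,\dots,u_{d-1})$ and right-compressedness of $\widehat{\mathcal{F}}_{d,k}$ in $\binom{\N}{d-1}$ gives $\vb=(\widehat{\vb},k)\in\mathcal{F}$. In both cases $\vb\in\mathcal{F}$, which completes the proof.

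I do not expect a genuine obstacle here; the only point requiring care is the bookkeeping in the case $i\ge 2$ — verifying that after deleting the minimal coordinate the altered entry $u_i-1$ neither collides with a neighbour nor drops down to $k$, so that $\widehat{\vb}$ really is a $(d-1)$-subset of $\N_{>k}$ — together with keeping track of why $d\ge 2$ is exactly the hypothesis that makes the fixed coordinate available in both branches. Everything else is a direct appeal to the definition of left-/right-compressedness as initial segments in the squashed order, combined with the implication $\le_{\B}\Rightarrow\le_{\Sq}$ recorded in \Cref{sec2}.
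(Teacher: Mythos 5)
Your proof is correct, but it organizes the reduction differently from the paper. The paper handles an arbitrary Borel comparison $\vb\le_{\B}\ub$ in exactly two moves: it passes from $\ub$ to the intermediate set $\wb=(v_1,\dots,v_{d-1},u_d)$, noting $(v_1,\dots,v_{d-1})\le_{\Sq}(u_1,\dots,u_{d-1})$ and invoking right-compressedness of $\widehat{\mathcal{F}}_{d,u_d}$, and then from $\wb$ to $\vb$ using left-compressedness of $\widehat{\mathcal{F}}_{1,v_1}$, since $\wb$ and $\vb$ share the smallest element $v_1$; no induction is needed. You instead first prove that shiftedness is equivalent to closure under the covering relations of $\le_{\B}$ (single unit decrements), via induction on $\sum_i(u_i-v_i)$, and then realise each elementary decrement by fixing either the smallest coordinate (case $i\ge 2$, left-compressedness) or the largest one (case $i=1$, right-compressedness). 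The underlying mechanism is identical in both arguments --- each left/right slice of $\mathcal{F}$ is a squashed initial segment, and $\le_{\B}$ implies $\le_{\Sq}$ --- but your decomposition is finer. The paper's two-step route buys brevity (no auxiliary equivalence, no induction); yours buys transparency, since only one coordinate changes per step, making it completely explicit which hypothesis is used when and why $d\ge 2$ is needed in both branches. The points you single out for care (that $\widehat{\vb}$ remains in $\binom{\N_{>k}}{d-1}$ when $i\ge 2$, and that $u_1\ge 2$ when the first coordinate is lowered) are exactly the right ones, and you handle them correctly.
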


\begin{proof}
 Let $\ub=(u_1,\dots,u_d)\in\mathcal{F}$ and $\vb=(v_1,\dots,v_d)\in\binom{\N}{d}$ with $\vb\le_{\B}\ub$. We have to show that $\vb\in\mathcal{F}$. Consider $\wb=(v_1,\dots,v_{d-1},u_d)$. It is clear that $\vb\le_{\B}\wb\le_{\B}\ub$. Since $\mathcal{F}$ is right-compressed, $\widehat{\mathcal{F}}_{d,u_d}$ is compressed. So from $\widehat{\ub}=(u_1,\dots,u_{d-1})\in \widehat{\mathcal{F}}_{d,u_d}$ and
 \[
  \widehat{\wb}=(v_1,\dots,v_{d-1})\le\widehat{\ub}
 \]
it follows that $\widehat{\wb}\in\widehat{\mathcal{F}}_{d,u_d}$. Hence, $\wb\in\mathcal{F}$. Note that $\vb$ and $\wb$ have the same smallest element, namely $v_1$. So using the assumption that $\mathcal{F}$ is left-compressed and an analogous argument as above we conclude that $\vb\in\mathcal{F}$.
\end{proof}

\begin{example}
 Consider again the family $\mathcal{F}\subseteq \binom{\N}{3}$ in \Cref{ex1}. We know that
 \[
  \mathcal{F}^{(1)}=\mathcal{C}^{(l)}(\mathcal{F})=\{(1,2,3),\;(1,2,4),\; (2,3,4),\; (3,4,5)\}.
 \]
Hence,
\[
 \mathcal{F}^{(2)}=\mathcal{C}^{(r)}(\mathcal{F}^{(1)})=\{(1,2,3),\;(1,2,4),\; (1,3,4),\; (1,2,5)\}.
\]
Clearly, $\mathcal{F}^{(2)}$ is left- and right-compressed. Thus, $\mathcal{F}^{(\infty)}=\mathcal{F}^{(2)}$.
\end{example}

The next lemma examines the compositions of $\Inc$ and partial compressions. Recall that the map $\pi_1\in\Inc$ is defined by
 $\pi_1(j)=j+1$ for all  $j\in \N$.

\begin{lemma}
\label{compositions}
Let $\mathcal{F}\subseteq \binom{\N}{d}$ be a finite family of $d$-sets. Then
 \begin{align}
  \mathcal{C}^{(l)}(\Inc(\mathcal{F}))&=\bigcup_{k\geq 1}\{(k,\widehat{\ub})\mid \widehat{\ub}\in \mathcal{C}_{>k}\big(\Inc(\widehat{\mathcal{F}}_{1,k})\cup \pi_1(\widehat{\mathcal{F}}_{1,k-1})\big)\},\label{eq1}\\
  \mathcal{C}^{(r)}(\Inc(\mathcal{F}))&=\bigcup_{k\geq 1}\{(\widehat{\ub},k)\mid \widehat{\ub}\in \mathcal{C}\big(\widehat{\mathcal{F}}_{d,k}\cup\Inc(\widehat{\mathcal{F}}_{d,k-1})\big)\},\label{eq2}\\
  \Inc(\mathcal{C}^{(l)}(\mathcal{F}))&=\bigcup_{k\geq 1}\{(k,\widehat{\ub})\mid \widehat{\ub}\in \Inc\big(\mathcal{C}_{>k}(\widehat{\mathcal{F}}_{1,k})\big)\cup \pi_1\big(\mathcal{C}_{>k-1}(\widehat{\mathcal{F}}_{1,k-1})\big)\},\label{eq3}\\
\Inc(\mathcal{C}^{(r)}(\mathcal{F}))&=\bigcup_{k\geq 1}\{(\widehat{\ub},k)\mid \widehat{\ub}\in \mathcal{C}(\widehat{\mathcal{F}}_{d,k})\cup\Inc\big(\mathcal{C}(\widehat{\mathcal{F}}_{d,k-1})\big)\}.\label{eq4}
 \end{align}
\end{lemma}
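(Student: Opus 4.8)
The plan is to prove each of the four identities by unwinding the definitions of $\Inc$, of the partial compressions, and of the operations $\widehat{(-)}$ and $\pi_1$, then checking equality of the two sides element by element. All four are proved in the same spirit, so I would set up the bookkeeping once and reuse it. The key structural observation is the description \eqref{Inc} of $\Inc(\ub)$: for $\ub=(u_1,\dots,u_d)$, the smallest coordinate of any element of $\Inc(\ub)$ is either $u_1$ (if $\pi=\pi_i$ with $i\ge 2$, equivalently if we do not increment $u_1$) or $u_1+1$ (if $\pi=\pi_1$, i.e.\ we increment everything); dually the largest coordinate is either $u_d$ or $u_d+1$. This is what lets $\Inc$ interact cleanly with the slices $\mathcal{F}_{1,k}$ and $\mathcal{F}_{d,k}$. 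Concretely, an element of $\Inc(\mathcal{F})$ with smallest coordinate $k$ arises either from some $\ub\in\mathcal{F}_{1,k}$ by applying a $\pi_i$ with $i\ge 2$ (these contribute exactly $\Inc(\widehat{\mathcal{F}}_{1,k})$ in the last $d-1$ coordinates, since $\pi_i$ restricted to $\N_{>1}$, after relabeling, acts as an element of $\Inc_1$), or from some $\ub\in\mathcal{F}_{1,k-1}$ by applying $\pi_1$ (which contributes $\pi_1(\widehat{\mathcal{F}}_{1,k-1})$). This yields
\[
 (\Inc(\mathcal{F}))_{1,k}=\{(k,\widehat{\ub})\mid \widehat{\ub}\in \Inc(\widehat{\mathcal{F}}_{1,k})\cup \pi_1(\widehat{\mathcal{F}}_{1,k-1})\},
\]
and \eqref{eq1} then follows by applying $\mathcal{C}^{(l)}$, i.e.\ by taking $\mathcal{C}_{>k}$ of the $(d-1)$-set appearing in each slice, using \Cref{properties}(i). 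Identity \eqref{eq2} is the mirror image: an element of $\Inc(\mathcal{F})$ with largest coordinate $k$ comes either from $\ub\in\mathcal{F}_{d,k}$ without incrementing the top coordinate — but note that if we don't increment $u_d$ we don't increment anything above it, so in fact $\pi$ must fix all coordinates, contributing just $\widehat{\mathcal{F}}_{d,k}$ — or from $\ub\in\mathcal{F}_{d,k-1}$ by some $\pi_i$ that increments $u_d$ to $k$, and the induced action on $\widehat{\ub}=(u_1,\dots,u_{d-1})$ ranges over all of $\Inc(\widehat{\mathcal{F}}_{d,k-1})$; applying $\mathcal{C}^{(r)}$ gives \eqref{eq2}.

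For \eqref{eq3} and \eqref{eq4} I would argue in the other order: first pass to partial compressions, then apply $\Inc$. For \eqref{eq3}, by \Cref{properties}(i) the slice $(\mathcal{C}^{(l)}(\mathcal{F}))_{1,k}$ corresponds to $\mathcal{C}_{>k}(\widehat{\mathcal{F}}_{1,k})$ in the last $d-1$ coordinates. Now apply the slice description of $\Inc$ just derived, but with $\mathcal{C}^{(l)}(\mathcal{F})$ in place of $\mathcal{F}$: the $k$-th slice of $\Inc(\mathcal{C}^{(l)}(\mathcal{F}))$ has last $d-1$ coordinates ranging over $\Inc\big(\mathcal{C}_{>k}(\widehat{\mathcal{F}}_{1,k})\big)\cup \pi_1\big(\mathcal{C}_{>k-1}(\widehat{\mathcal{F}}_{1,k-1})\big)$, which is exactly \eqref{eq3}. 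Identity \eqref{eq4} is obtained the same way from the right-slice description, using $(\mathcal{C}^{(r)}(\mathcal{F}))_{d,k}\leftrightarrow \mathcal{C}(\widehat{\mathcal{F}}_{d,k})$.

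The main point to get right — and the step I expect to be the real obstacle — is the claim that applying the $\pi_i$ with $i\ge 2$ to the elements of $\mathcal{F}_{1,k}$ produces, after dropping the fixed first coordinate $k$, precisely $\Inc(\widehat{\mathcal{F}}_{1,k})$, and similarly that the $\pi_i$ incrementing $u_d$ induce all of $\Inc$ on the truncated $(d-1)$-sets. This requires care because an arbitrary $\pi_i\in\Inc_1$ does not obviously restrict to an element of $\Inc_1$ on a shifted copy of $\N$; one has to check that relabeling $\{k+1,k+2,\dots\}$ as $\{1,2,\dots\}$ carries $\{\pi_i\mid i\ge 2\}$ onto $\Inc_1$ acting on $\binom{\N}{d-1}$ (for the left case) and, for the right case, that the maps $\pi_i$ with $i\le u_d$ applied to $\ub$ with $u_d=k-1$, then truncated, realize every element of $\Inc(\widehat{\ub})$. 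Both reduce to the explicit list \eqref{Inc} together with the observation that incrementing a subset of a prefix of coordinates is exactly what $\Inc_1$ does; once this is stated cleanly, each of \eqref{eq1}--\eqref{eq4} follows by a routine union-over-$k$ computation, using that the slices partition the family and that $\mathcal{C}_{>k}$ and $\mathcal{C}$ commute with forming these slices by \Cref{properties}(i).
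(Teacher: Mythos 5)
Your proposal is correct and follows essentially the same route as the paper: decompose $\Inc(\ub)$ according to its first (resp.\ last) coordinate to obtain the slice descriptions of $\Inc(\mathcal{F})$, apply the partial compressions slicewise to get \eqref{eq1}--\eqref{eq2}, and derive \eqref{eq3}--\eqref{eq4} by substituting $\mathcal{C}^{(l)}(\mathcal{F})$, $\mathcal{C}^{(r)}(\mathcal{F})$ into those slice descriptions via \Cref{properties}(i). (Only minor wording slips: the maps fixing $u_1$ are the $\pi_i$ with $i\ge u_1+1$, not all $i\ge 2$, and $\Inc_1$ increments a final segment, not a prefix, of the coordinates; neither affects your displayed formulas.)
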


\begin{proof}
 For any $\ub=(u_1,\widehat{\ub})=(\widehat{\ub}',u_d)\in\binom{\N}{d}$ it follows from \eqref{Inc} that
 \begin{align*}
  \Inc(\ub)&=\{(u_1,\widehat{\vb})\mid \widehat{\vb}\in\Inc(\widehat{\ub})\}\cup\{(u_1+1,\pi_1(\widehat{\ub}))\}\\
  &=\{(\widehat{\ub}',u_d)\}\cup\{(\widehat{\vb},u_d+1)\mid \widehat{\vb}\in\Inc(\widehat{\ub}')\}.
 \end{align*}
This yields
\begin{align*}
 \Inc(\mathcal{F}_{1,k})&=\bigcup_{{\ub}\in{\mathcal{F}}_{1,k}}\Inc({\ub})=\bigcup_{\widehat{\ub}\in\widehat{\mathcal{F}}_{1,k}}\Inc\big((k,\widehat{\ub})\big)\\
 &=\{(k,\widehat{\vb})\mid \widehat{\vb}\in \Inc(\widehat{\mathcal{F}}_{1,k})\}\cup\{(k+1,\widehat{\vb})\mid \widehat{\vb}\in \pi_1(\widehat{\mathcal{F}}_{1,k})\},\\
 \Inc(\mathcal{F}_{d,k})&=\bigcup_{{\ub}\in{\mathcal{F}}_{d,k}}\Inc({\ub})=\bigcup_{\widehat{\ub}\in\widehat{\mathcal{F}}_{d,k}}\Inc\big((\widehat{\ub},k)\big)\\
 &=\{(\widehat{\ub},k)\mid \widehat{\ub}\in \widehat{\mathcal{F}}_{d,k}\}\cup\{(\widehat{\vb},k+1)\mid \widehat{\vb}\in \Inc(\widehat{\mathcal{F}}_{d,k})\}.
\end{align*}
Hence, we get the following equations which imply \eqref{eq1} and \eqref{eq2}:
 \begin{align}
 \Inc(\mathcal{F})&=\bigcup_{k\geq 1}\Inc(\mathcal{F}_{1,k})=\bigcup_{k\geq 1}\{(k,\widehat{\vb})\mid \widehat{\vb}\in \Inc(\widehat{\mathcal{F}}_{1,k})\cup \pi_1(\widehat{\mathcal{F}}_{1,k-1})\},\label{eq6}\\
 \Inc(\mathcal{F})&=\bigcup_{k\geq 1}\Inc(\mathcal{F}_{d,k})=\bigcup_{k\geq 1}\{(\widehat{\vb},k)\mid \widehat{\vb}\in \widehat{\mathcal{F}}_{d,k}\cup \Inc(\widehat{\mathcal{F}}_{d,k-1})\}\label{eq7}.
 \end{align}

Replacing $\mathcal{F}$ by $\mathcal{C}^{(l)}(\mathcal{F})$ in \eqref{eq6} and by $\mathcal{C}^{(r)}(\mathcal{F})$ in \eqref{eq7} and then using \Cref{properties}(i) we obtain \eqref{eq3} and \eqref{eq4}.
\end{proof}

We will prove \Cref{th33} by induction on $d$. The next lemma allows us to apply the induction hypothesis.
We say that \Cref{th33} is {\em true for $d$} if for all finite families $\mathcal{F}\subseteq \binom{\N}{d}$ it holds that $\Inc(\mathcal{C}(\mathcal{F}))\subseteq \mathcal{C}(\Inc(\mathcal{F})).$

\begin{lemma}\label{lm37}
Let $d\ge 2$ and assume that \Cref{th33} is true for $d-1$. Then the following statements hold:
\begin{enumerate}
 \item
 For all $k\in\N$ and all finite families $\mathcal{G}\subseteq\binom{\N_{>k}}{d-1}$ one has
 \[
  \Inc(\mathcal{C}_{>k}(\mathcal{G}))\subseteq \mathcal{C}_{>k}(\Inc(\mathcal{G})).
 \]
 \item
 For all finite families $\mathcal{F}\subseteq \binom{\N}{d}$ one has
\begin{align*}
\Inc(\mathcal{C}^{(l)}(\mathcal{F}))&\subseteq \mathcal{C}^{(l)}(\Inc(\mathcal{F})),\\
\Inc(\mathcal{C}^{(r)}(\mathcal{F}))&\subseteq \mathcal{C}^{(r)}(\Inc(\mathcal{F})).
\end{align*}
\end{enumerate}
\end{lemma}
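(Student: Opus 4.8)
The plan is to prove the two parts of \Cref{lm37} essentially by unravelling the explicit formulas from \Cref{compositions} and then comparing the families appearing inside the compression operators coordinate-slice by coordinate-slice. For part (i), the key observation is that the compression $\mathcal{C}_{>k}$ in $\binom{\N_{>k}}{d-1}$ is, after the obvious order-preserving bijection $\N_{>k}\to\N$ (subtract $k$), just the ordinary compression $\mathcal{C}$ in $\binom{\N}{d-1}$, and the $\Inc_1$-action commutes with this bijection in the relevant sense (applying $\pi\in\Inc_1$ and then shifting down by $k$, versus shifting down by $k$ and then applying $\pi$, produce the same result since $\pi$ only ever adds $1$ to coordinates $\ge$ some index). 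Hence part (i) is literally the assumed statement ``\Cref{th33} is true for $d-1$'' transported along this bijection, and I would spell this out in one or two sentences.

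**Next I would** do part (i) $\Rightarrow$ part (ii) using \Cref{compositions}. For the left partial compression: by \eqref{eq3} and \eqref{eq1}, both $\Inc(\mathcal{C}^{(l)}(\mathcal{F}))$ and $\mathcal{C}^{(l)}(\Inc(\mathcal{F}))$ are disjoint unions over $k\ge1$ of families of the form $\{(k,\widehat{\ub})\mid \widehat{\ub}\in\mathcal{G}_k\}$, so it suffices to show for each $k$ that the relevant $(d-1)$-set family in the first union is contained in the one from the second union — and since the second is a $\mathcal{C}_{>k}$-compressed family, by minimality it is enough to show the containment
\[
 \Inc\big(\mathcal{C}_{>k}(\widehat{\mathcal{F}}_{1,k})\big)\cup \pi_1\big(\mathcal{C}_{>k-1}(\widehat{\mathcal{F}}_{1,k-1})\big)\ \subseteq\ \mathcal{C}_{>k}\big(\Inc(\widehat{\mathcal{F}}_{1,k})\cup \pi_1(\widehat{\mathcal{F}}_{1,k-1})\big).
\]
The first summand on the left is handled by part (i) together with the monotonicity ``$\mathcal{A}\subseteq\mathcal{B}\Rightarrow\mathcal{C}_{>k}(\mathcal{A})\subseteq\mathcal{C}_{>k}(\mathcal{B})$'' applied to $\Inc(\widehat{\mathcal{F}}_{1,k})\subseteq\Inc(\widehat{\mathcal{F}}_{1,k})\cup\pi_1(\widehat{\mathcal{F}}_{1,k-1})$; the second summand $\pi_1(\mathcal{C}_{>k-1}(\widehat{\mathcal{F}}_{1,k-1}))$ needs the observation that $\pi_1(\mathcal{C}_{>k-1}(\mathcal{G}))=\mathcal{C}_{>k}(\pi_1(\mathcal{G}))$ (applying $\pi_1=+\mathbf{1}$ is an order-isomorphism $\binom{\N_{>k-1}}{d-1}\to\binom{\N_{>k}}{d-1}$, hence carries initial segments to initial segments), after which one again uses monotonicity of $\mathcal{C}_{>k}$ together with $\pi_1(\widehat{\mathcal{F}}_{1,k-1})\subseteq\Inc(\widehat{\mathcal{F}}_{1,k})\cup\pi_1(\widehat{\mathcal{F}}_{1,k-1})$. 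The right partial compression case is entirely parallel, using \eqref{eq4} and \eqref{eq2} and the easy fact $\widehat{\mathcal{F}}_{d,k}\subseteq\widehat{\mathcal{F}}_{d,k}\cup\Inc(\widehat{\mathcal{F}}_{d,k-1})$ together with part (i) on the second summand.

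**The main obstacle** I anticipate is bookkeeping rather than conceptual: one must be careful that the two summands inside each $\mathcal{C}(\,\cdot\,)$ or $\mathcal{C}_{>k}(\,\cdot\,)$ live in the correct ground set (the shift between index $k$ and $k-1$ is exactly what makes the $\pi_1$-translation compatibility lemma necessary), and one must correctly invoke the ``minimality'' characterization of compressed sets — namely that a $\mathcal{C}_{>k}$-compressed family of a given size is $\le$ every family of that size, so that to prove $\mathcal{A}\subseteq\mathcal{C}_{>k}(\mathcal{B})$ it is enough to know $\mathcal{A}$ is a subset of $\binom{\N_{>k}}{d-1}$ with $|\mathcal{A}|\le|\mathcal{B}|$ and $\mathcal{A}\subseteq$ something $\le_{\Sq}$-dominated appropriately — actually here the cleanest route is: the left-hand side is contained in $\mathcal{C}_{>k}$ of the larger set by applying part (i) and the $\pi_1$-compatibility to each summand separately and then using that $\mathcal{C}_{>k}$ is monotone and that $\mathcal{C}_{>k}(\mathcal{A})\cup\mathcal{C}_{>k}(\mathcal{A}')\subseteq\mathcal{C}_{>k}(\mathcal{A}\cup\mathcal{A}')$ whenever $\mathcal{C}_{>k}(\mathcal{A})$ and $\mathcal{C}_{>k}(\mathcal{A}')$ are both initial segments (one contains the other). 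Verifying this last ``union of two initial segments is an initial segment'' point and the monotonicity of $\mathcal{C}_{>k}$ are the only genuine lemmas needed beyond the hypothesis; both are immediate from $\mathcal{C}_{>k}(\mathcal{A})$ being the set of the $|\mathcal{A}|$ smallest elements of $\binom{\N_{>k}}{d-1}$.
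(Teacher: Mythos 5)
Your proposal is correct and follows essentially the same route as the paper: part (i) is obtained by transporting the $d-1$ case of \Cref{th33} along the shift bijection $\N_{>k}\to\N$ (which commutes with the $\Inc_1$-action), and part (ii) follows from the formulas of \Cref{compositions} together with part (i), the compatibility $\pi_1(\mathcal{C}_{>k-1}(\mathcal{G}))=\mathcal{C}_{>k}(\pi_1(\mathcal{G}))$, and the fact that $\mathcal{C}_{>k}(\mathcal{G})\cup\mathcal{C}_{>k}(\mathcal{G}')\subseteq\mathcal{C}_{>k}(\mathcal{G}\cup\mathcal{G}')$, which is exactly the paper's argument (you phrase the last point via monotonicity of $\mathcal{C}_{>k}$, but that is the same elementary fact).
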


\begin{proof}
(i) The map $\pi_1^k\colon\binom{\N}{d-1}\to \binom{\N_{>k}}{d-1}$ given by
\[
 \pi_1^k(\ub)=\ub+\mathbf{k}:=(u_1+k,\dots,u_d+k)\quad\text{for every } \ub=(u_1,\dots,u_d)\in\binom{\N}{d-1}
\]
is a bijection. Let $\sigma$ denote the inverse of $\pi_1^k$. Then for finite families $\mathcal{G}\subseteq\binom{\N_{>k}}{d-1}$ and $\mathcal{G}'\subseteq\binom{\N}{d-1}$ it is apparent that
\[
 \mathcal{C}_{>k}(\mathcal{G})=\pi_1^k(\mathcal{C}(\sigma(\mathcal{G}))),\ \ \Inc(\sigma(\mathcal{G}))=\sigma(\Inc(\mathcal{G})),\ \ \Inc(\pi_1^k(\mathcal{G}'))=\pi_1^k(\Inc(\mathcal{G}')).
\]
So the assumption gives
\[
 \begin{aligned}
  \Inc(\mathcal{C}_{>k}(\mathcal{G}))&=\Inc(\pi_1^k(\mathcal{C}(\sigma(\mathcal{G}))))=\pi_1^k(\Inc(\mathcal{C}(\sigma(\mathcal{G}))))\\
  &\subseteq \pi_1^k(\mathcal{C}(\Inc(\sigma(\mathcal{G})))) = \pi_1^k(\mathcal{C}(\sigma(\Inc(\mathcal{G}))))=\mathcal{C}_{>k}(\Inc(\mathcal{G})).
 \end{aligned}
\]

(ii) It is easily seen that for all $k\in\N$ and all finite families $\mathcal{G},\mathcal{G}'\subseteq\binom{\N_{>k}}{d-1}$ one has
\[
 \mathcal{C}_{>k}(\mathcal{G}\cup\mathcal{G}')\supseteq\mathcal{C}_{>k}(\mathcal{G})\cup\mathcal{C}_{>k}(\mathcal{G}')\quad \text{and}\quad
 \mathcal{C}_{>k+1}(\pi_1(\mathcal{G}))=\pi_1(\mathcal{C}_{>k}(\mathcal{G})).
\]
So from \eqref{eq1}, \eqref{eq3}, and (i) it follows that
\begin{align*}
\mathcal{C}^{(l)}(\Inc(\mathcal{F}))&=\bigcup_{k\geq 1}\{(k,\widehat{\ub})\mid \widehat{\ub}\in \mathcal{C}_{>k}\big(\Inc(\widehat{\mathcal{F}}_{1,k})\cup \pi_1(\widehat{\mathcal{F}}_{1,k-1})\big)\}\\
&\supseteq\bigcup_{k\geq 1}\{(k,\widehat{\ub})\mid \widehat{\ub}\in \mathcal{C}_{>k}\big(\Inc(\widehat{\mathcal{F}}_{1,k})\big)\cup \mathcal{C}_{>k}\big(\pi_1(\widehat{\mathcal{F}}_{1,k-1})\big)\}\\
&\supseteq\bigcup_{k\geq 1}\{(k,\widehat{\ub})\mid \widehat{\ub}\in \Inc\big(\mathcal{C}_{>k}(\widehat{\mathcal{F}}_{1,k})\big)\cup \pi_1\big(\mathcal{C}_{>k-1}(\widehat{\mathcal{F}}_{1,k-1})\big)\}\\
&=\Inc(\mathcal{C}^{(l)}(\mathcal{F})).
\end{align*}

The inclusion $\Inc(\mathcal{C}^{(r)}(\mathcal{F}))\subseteq \mathcal{C}^{(r)}(\Inc(\mathcal{F}))$ is proved similarly.
\end{proof}

We are now ready to prove \Cref{th33}.

\begin{proof}[Proof of  \Cref{th33}]
We argue by induction on $d$. The case $d=1$ is easy to check. Assume that $d\ge 2$ and that the theorem is true for $d-1$. Let $\mathcal{F}\subseteq \binom{\N}{d}$ be a finite family. By \Cref{lm33}, it suffices to prove that
$$|\Inc(\mathcal{C}(\mathcal{F}))|\le |\mathcal{C}(\Inc(\mathcal{F}))|=|\Inc(\mathcal{F})|.$$
From \Cref{properties}(ii) and \Cref{lm37}(ii) it follows that
\begin{align}	\label{eq5}
\begin{split}
|\Inc(\mathcal{F})|&=|\mathcal{C}^{(l)}(\Inc(\mathcal{F}))|\\
&\ge |\Inc(\mathcal{C}^{(l)}(\mathcal{F}))|=|\Inc(\mathcal{F}^{(1)})|
=|\mathcal{C}^{(r)}(\Inc(\mathcal{F}^{(1)}))|\\
&\ge |\Inc(\mathcal{C}^{(r)}(\mathcal{F}^{(1)}))|=|\Inc(\mathcal{F}^{(2)})|=|\mathcal{C}^{(l)}(\Inc(\mathcal{F}^{(2)}))|\\
&\ge \cdots\ge |\Inc(\mathcal{F}^{(\infty)})|.
\end{split}
\end{align}
Thus, to complete the proof it is enough to show that
$$|\Inc(\mathcal{C}(\mathcal{F}))|\le |\Inc(\mathcal{F}^{(\infty)})|.$$
This follows from the following

\begin{claim}
 Let $\mathcal{G},\mathcal{H}\subseteq \binom{\N}{d}$ be finite families of $d$-sets with $|\mathcal{G}|=|\mathcal{H}|$. If $\mathcal{G}$ is compressed and $\mathcal{H}$ is left- and right-compressed, then $|\Inc(\mathcal{G})|\le |\Inc(\mathcal{H})|.$
\end{claim}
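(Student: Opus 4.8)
The plan is to reduce the desired inequality $|\Inc(\mathcal{G})|\le|\Inc(\mathcal{H})|$ to a purely numerical statement about sequences, by decomposing both $\Inc$-images along the last coordinate. First note that both hypotheses force $\mathcal{G}$ and $\mathcal{H}$ to be right-compressed: if $\mathcal{G}$ is compressed then for every $k$ the slice $\widehat{\mathcal{G}}_{d,k}$ is an initial segment of $\binom{\N}{d-1}$, while $\mathcal{H}$ is right-compressed by assumption. Now let $\mathcal{F}\subseteq\binom{\N}{d}$ be any right-compressed family. For each $k$ the family $\widehat{\mathcal{F}}_{d,k}$ is compressed in $\binom{\N}{d-1}$, so by \Cref{lm33} applied in $\binom{\N}{d-1}$ the family $\Inc(\widehat{\mathcal{F}}_{d,k-1})$ is compressed in $\binom{\N}{d-1}$ as well; being initial segments, $\widehat{\mathcal{F}}_{d,k}$ and $\Inc(\widehat{\mathcal{F}}_{d,k-1})$ are comparable under inclusion, so their union has cardinality the larger of the two. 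Plugging this into \eqref{eq7} yields, for every right-compressed $\mathcal{F}$,
\[
 |\Inc(\mathcal{F})|\;=\;\sum_{k\ge1}\bigl|\widehat{\mathcal{F}}_{d,k}\cup\Inc(\widehat{\mathcal{F}}_{d,k-1})\bigr|\;=\;\sum_{k\ge1}\max\bigl(|\widehat{\mathcal{F}}_{d,k}|,\ \iota(|\widehat{\mathcal{F}}_{d,k-1}|)\bigr),
\]
where, since by \Cref{lm33} the $\Inc$-image of a compressed family depends only on its size, $\iota(n)$ denotes $|\Inc(\mathcal{K})|$ for $\mathcal{K}\subseteq\binom{\N}{d-1}$ the compressed family of size $n$. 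The function $\iota$ is strictly increasing, satisfies $\iota(n)\ge n$, $\iota(0)=0$, and sends full levels to full levels: $\iota\bigl(\binom{k-1}{d-1}\bigr)=\binom{k}{d-1}$. (Strict monotonicity follows from \Cref{lm33}: enlarging a compressed family by one $(d-1)$-set $\ub$ adds $\ub+\mathbf{1}$ to its $\Inc$-image.)

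Writing $a_k=|\widehat{\mathcal{G}}_{d,k}|$, $b_k=|\widehat{\mathcal{H}}_{d,k}|$ and $a_0=b_0=0$, the Claim becomes the inequality
\[
 \sum_{k\ge1}\max\bigl(a_k,\iota(a_{k-1})\bigr)\;\le\;\sum_{k\ge1}\max\bigl(b_k,\iota(b_{k-1})\bigr),
\]
for sequences with $0\le a_k,b_k\le\binom{k-1}{d-1}$ and $\sum_k a_k=\sum_k b_k=|\mathcal{G}|$. Moreover, since $\mathcal{G}$ is compressed, $(a_k)$ is \emph{greedy}: the slices $\widehat{\mathcal{G}}_{d,k}$ are full levels for all $k$ below a threshold, then one partial level, then empty; equivalently the partial sums $\sum_{k\le j}a_k$ are as large as the constraints allow, and in particular $\sum_{k\le j}a_k\ge\sum_{k\le j}b_k$ for every $j$. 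Thus everything reduces to showing that, among admissible sequences of a prescribed sum, the greedy one minimizes the functional $\Phi(b):=\sum_{k\ge1}\max\bigl(b_k,\iota(b_{k-1})\bigr)$.

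I would prove this last statement by induction on the common sum $N$. Peeling off the highest nonempty level $m$ of $(b_k)$ gives $\Phi(b)=\Phi(b')+(b_m-\iota(b_{m-1}))^{+}+\iota(b_m)$, where $b'$ is the restriction of $b$ to the levels below $m$; applying the induction hypothesis to $b'$ reduces the problem to an inequality for the explicit greedy value $\phi(M):=\Phi(\text{greedy sequence of sum }M)$, which has the closed form $\phi(M)=\binom{K}{d}+\iota\bigl(M-\binom{K-1}{d}\bigr)$ when $\binom{K-1}{d}<M\le\binom{K}{d}$. Carrying out the induction then amounts to a quantitative comparison of the values of $\iota=\iota_{d-1}$ at $t$, at $M-t$ and at $M$ over the admissible range of the transferred mass $t$ — a superadditivity-type estimate for $\iota_{d-1}$. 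This is the main obstacle: one cannot get away with crude increment bounds (indeed single-unit exchange moves on $(b_k)$ need not decrease $\Phi$), but must exploit the precise, non-concave step structure of $\iota_{d-1}$, which in turn comes from the explicit form of $\Inc$ on single sets in \eqref{Inc} together with \Cref{lm33}. Once the Claim is proved, applying it with $\mathcal{G}=\mathcal{C}(\mathcal{F})$ and $\mathcal{H}=\mathcal{F}^{(\infty)}$ — which is left- and right-compressed by \Cref{lm38} and has size $|\mathcal{F}|$ — gives $|\Inc(\mathcal{C}(\mathcal{F}))|\le|\Inc(\mathcal{F}^{(\infty)})|$, and combined with \eqref{eq5} this completes the proof of \Cref{th33}.
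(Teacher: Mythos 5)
Your reduction is correct as far as it goes: for a right-compressed family $\mathcal{F}$ the slices $\widehat{\mathcal{F}}_{d,k}$ and (by \Cref{lm33}) their $\Inc$-images are initial segments of $\binom{\N}{d-1}$, so \eqref{eq7} does give $|\Inc(\mathcal{F})|=\sum_{k\ge1}\max\bigl(|\widehat{\mathcal{F}}_{d,k}|,\iota(|\widehat{\mathcal{F}}_{d,k-1}|)\bigr)$, and a compressed $\mathcal{G}$ does produce the greedy sequence. But the argument then stands or falls with the purely numerical assertion that the greedy sequence minimizes $\Phi(b)=\sum_k\max\bigl(b_k,\iota(b_{k-1})\bigr)$ over all sequences with $0\le b_k\le\binom{k-1}{d-1}$ and fixed sum, and this you do not prove: you yourself flag the needed superadditivity-type estimate for $\iota_{d-1}$ as ``the main obstacle.'' Moreover the induction scheme you sketch has a concrete structural gap: after peeling off the top level $m$ you are left with $\Phi(b)=\Phi(b')+(b_m-\iota(b_{m-1}))^{+}+\iota(b_m)$, and the induction hypothesis only bounds $\Phi(b')$ by the greedy value of its sum, giving no control over $b_{m-1}$, which enters the remaining term; so the step reduces to an inequality coupling $b_m$, $b_{m-1}$, $m$ and the binomial structure of $\iota$ that is essentially a fresh Kruskal--Katona-type numerical lemma. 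Note also that your formulation discards the left-compressed hypothesis entirely, so you are attempting a strictly stronger statement (true a posteriori by \Cref{main}, but correspondingly harder to prove directly); nothing in your sketch exploits the one hypothesis that the paper leans on.

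The paper avoids all numerical analysis by an exchange/minimal-counterexample argument: assuming a minimal left- and right-compressed violator $\mathcal{H}$, it replaces $\ub=\max\mathcal{H}$ by $\vb=\min(\mathcal{G}\setminus\mathcal{H})$ and shows $|\Inc(\mathcal{K})|\le|\Inc(\mathcal{H})|$ for $\mathcal{K}=(\mathcal{H}\setminus\{\ub\})\cup\{\vb\}$, using precisely the left-compressedness (together with shiftedness from \Cref{lm38}) to force $u_1<v_1$, whence $\Inc(\vb)\setminus\{\vb+\mathbf{1}\}\subseteq\Inc(\mathcal{H}\setminus\{\ub\})$ while $\ub+\mathbf{1}$ is lost; minimality (via \eqref{eq5} applied to $\mathcal{K}$) then yields the contradiction. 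To salvage your route you would have to actually prove the greedy-minimization lemma for $\Phi$, which is where the real work lies; as submitted, the proposal is an unproved reduction rather than a proof.
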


Suppose the claim is false and let $\mathcal{H}$ be the minimal left- and right-compressed family which violates the claim. Then $\mathcal{H}> \mathcal{G}$ because $\mathcal{G}$ is compressed. Let $\ub=\max \mathcal{H}$ and $\vb=\min(\mathcal{G}\setminus \mathcal{H}).$ Evidently, $\ub>\vb.$ Consider the family
$$\mathcal{K}=(\mathcal{H}\setminus\{\ub\})\cup\{\vb\}.$$
Then $\mathcal{K}<\mathcal{H}$. We will show that
\begin{equation}
\label{eq13}
 |\Inc(\mathcal{K})|\le |\Inc(\mathcal{H})|.
\end{equation}
If this is true, then combining it with \eqref{eq5} one gets
$$|\Inc(\mathcal{K}^{(\infty)})|\le |\Inc(\mathcal{K})|\le |\Inc(\mathcal{H})|.$$
This implies that the family $\mathcal{K}^{(\infty)}$, which is left- and right-compressed by virtue of \Cref{lm38}, also violates the above claim. But this contradicts the minimality of $\mathcal{H}$, because $\mathcal{K}^{(\infty)}\le \mathcal{K}<\mathcal{H}$ by \Cref{properties}(iii).

So it remains to prove \eqref{eq13}.
We first show that $u_1<v_1$. Indeed, if $v_1\le u_1$, then
$$\ub'=(v_1,u_2,\ldots,u_d)\le_{\B} (u_1,u_2,\ldots,u_d)=\ub.$$
This implies $\ub'\in \mathcal{H}$ since $\mathcal{H}$ is shifted by \Cref{lm38}. Thus,
$$(u_2,\ldots,u_{d})\in \widehat{\mathcal{H}}_{1,v_1}.$$
Note that $\widehat{\mathcal{H}}_{1,v_1}$ is compressed, because $\mathcal{H}$ is left-compressed. So from
$$(v_2,\ldots,v_{d})\le (u_2,\ldots,u_{d})$$
(this is true as $\vb<\ub$) it follows that $(v_2,\ldots,v_{d})\in \widehat{\mathcal{H}}_{1,v_1}.$ This yields $\vb\in \mathcal{H}$, which is a contradiction. Thus, we must have $u_1<v_1$.

Next, we show that
\begin{equation}
\label{eq14}
 \Inc(\vb)\setminus\{\vb+\mathbf{1}\}\subseteq \Inc(\mathcal{H}\setminus\{\ub\}).
\end{equation}
Let $\wb\in\Inc(\vb)\setminus\{\vb+\mathbf{1}\}$. Then there exists some $i\ge 1$ such that
\[
 \wb=(v_1,\dots,v_i,v_{i+1}+1,\dots,v_d+1).
\]
Set
\[
 \wb'=\wb-\mathbf{1}=(v_1-1,\dots,v_i-1,v_{i+1},\dots,v_d).
\]
Note that $\wb'\in\binom{\N}{d}$ since $v_1>u_1\ge 1$. Obviously, $\wb'<\vb$. This implies $\wb'\in \mathcal{G}$, and hence $\wb'\in \mathcal{H}$ as $\vb=\min(\mathcal{G}\setminus \mathcal{H}).$ Moreover, $\wb'\in \mathcal{H}\setminus\{\ub\}$ since $\wb'<\vb<\ub$.
It follows that
$$\wb=\pi_1(\wb')\in\Inc(\wb')\subseteq \Inc(\mathcal{H}\setminus\{\ub\}),$$
which verifies \eqref{eq14}.

Now for the family $\mathcal{K}=(\mathcal{H}\setminus\{\ub\})\cup\{\vb\}$ one has
$$\Inc(\mathcal{K})=\Inc(\mathcal{H}\setminus\{\ub\})\cup\Inc(\vb)=\Inc(\mathcal{H}\setminus\{\ub\})\cup\{\vb+\mathbf{1}\}.$$
As $\ub=\max \mathcal{H}$ it is clear that $\ub+\mathbf{1}\in \Inc(\mathcal{H})\setminus \Inc(\mathcal{H}\setminus\{\ub\})$. Thus,
\[
 |\Inc(\mathcal{K})|\le|\Inc(\mathcal{H}\setminus\{\ub\})|+1\le |\Inc(\mathcal{H})|.
\]
This proves \eqref{eq13}, and hence concludes the proof of the theorem.
\end{proof}

Next we derive a numerical version of \Cref{th33}. For a positive integer $m$ with the $d$-binomial representation \eqref{binom}
we set
\[
  \Inc^{[d]}(m)= \binom{a_d+1}{d}+\binom{a_{d-1}+1}{d-1}+\cdots+\binom{a_s+1}{s}.
\]
Also let $\Inc^{[d]}(0)=0$. This notation is motivated by the fact that if $|C(\ub)|=m$ for some $\ub\in\binom{\N}{d}$, then $|C(\ub+\mathbf{1})|=\Inc^{[d]}(m)$. Indeed, $|C(\ub)|=m$ means that $\ub$ is the $m$-th element of $\binom{\N}{d}$ in the squashed order.
So $m$ has the $d$-binomial representation \eqref{binom} if and only if
\[
 {\ub}=(a_s-s+1,\dots,a_s-1,a_s,a_{s+1}+1,\dots,a_{d-1}+1,a_{d}+1)
\]
(see \cite[p. 117]{An}). This gives
\[
 \ub+\mathbf{1}=(a_s-s+2,\dots,a_s,a_s+1,a_{s+1}+2,\dots,a_{d-1}+2,a_{d}+2),
\]
and thus
\[
 |C(\ub+\mathbf{1})|=\binom{a_d+1}{d}+\binom{a_{d-1}+1}{d-1}+\cdots+\binom{a_s+1}{s}=\Inc^{[d]}(m).
\]

\begin{corollary}
\label{numerical}
 For any finite family $\mathcal{F}\subseteq \binom{\N}{d}$ one has
 \[
  |\Inc(\mathcal{F})|\ge\Inc^{[d]}(|\mathcal{F}|).
 \]
\end{corollary}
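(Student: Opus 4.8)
The plan is to reduce the general inequality to the case of compressed families, where everything can be computed exactly, using Theorem \ref{th33} (equivalently, Theorem \ref{main}) as the crucial input. First I would invoke Theorem \ref{main}: for an arbitrary finite family $\mathcal{F}\subseteq\binom{\N}{d}$, its compression $\mathcal{C}(\mathcal{F})$ satisfies $|\mathcal{C}(\mathcal{F})|=|\mathcal{F}|$ and $|\Inc(\mathcal{F})|\ge|\Inc(\mathcal{C}(\mathcal{F}))|$. So it suffices to prove the equality
\[
 |\Inc(\mathcal{C}(\mathcal{F}))|=\Inc^{[d]}(|\mathcal{F}|)
\]
for every finite compressed family, and then the desired inequality follows at once.

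To establish this equality, write $m=|\mathcal{F}|=|\mathcal{C}(\mathcal{F})|$. If $m=0$ both sides are $0$ by the conventions $\Inc(\emptyset)=\emptyset$ and $\Inc^{[d]}(0)=0$, so assume $m\ge 1$. Since $\mathcal{C}(\mathcal{F})$ is a finite compressed set consisting of the $m$ smallest elements of $\binom{\N}{d}$ in the squashed order, by the discussion in \Cref{sec2} we have $\mathcal{C}(\mathcal{F})=C(\ub)$ where $\ub=\max\mathcal{C}(\mathcal{F})$ is the $m$-th element of $\binom{\N}{d}$. By \Cref{lm33},
\[
 \Inc(\mathcal{C}(\mathcal{F}))=\Inc(C(\ub))=C(\ub+\mathbf{1}),
\]
so $|\Inc(\mathcal{C}(\mathcal{F}))|=|C(\ub+\mathbf{1})|$. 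Now the computation carried out in the paragraph preceding the corollary applies verbatim: from the $d$-binomial representation \eqref{binom} of $m$ one reads off the explicit form of $\ub$, hence of $\ub+\mathbf{1}$, and concludes $|C(\ub+\mathbf{1})|=\Inc^{[d]}(m)$. Chaining the two facts gives $|\Inc(\mathcal{F})|\ge|\Inc(\mathcal{C}(\mathcal{F}))|=|C(\ub+\mathbf{1})|=\Inc^{[d]}(|\mathcal{F}|)$, as claimed.

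There is no real obstacle here: all the work has already been done. The inequality step is exactly Theorem \ref{main}, the identity $\Inc(C(\ub))=C(\ub+\mathbf{1})$ is \Cref{lm33}, and the identification of $|C(\ub+\mathbf{1})|$ with $\Inc^{[d]}(m)$ is the motivating computation stated just before the corollary. The only point deserving a word of care is the bookkeeping that the $m$-th element $\ub$ of $\binom{\N}{d}$ in the squashed order indeed has the coordinates $(a_s-s+1,\dots,a_s,a_{s+1}+1,\dots,a_d+1)$ dictated by the $d$-binomial representation of $m$, which is the standard fact recalled from \cite[p.~117]{An}; granting that, the corollary is immediate.
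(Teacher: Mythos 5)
Your proposal is correct and follows essentially the same route as the paper: reduce to the compressed case via Theorem \ref{main} (equivalently Theorem \ref{th33}), then compute $|\Inc(\mathcal{C}(\mathcal{F}))|=|C(\ub+\mathbf{1})|=\Inc^{[d]}(|\mathcal{F}|)$ using Lemma \ref{lm33} and the binomial-representation computation preceding the corollary. No gaps.
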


\begin{proof}
 Since $\mathcal{C}(\mathcal{F})$ is compressed, it holds that $\mathcal{C}(\mathcal{F})=C(\ub)$ with $\ub=\max\mathcal{C}(\mathcal{F})$. By \Cref{lm33}, $\Inc(\mathcal{C}(\mathcal{F}))=C(\ub+\mathbf{1})$. So as explained above, this gives
 \[
  |\Inc(\mathcal{C}(\mathcal{F}))|=|C(\ub+\mathbf{1})|=\Inc^{[d]}(|C(\ub)|)=\Inc^{[d]}(|\mathcal{F}|).
\]
The desired conclusion now follows from \Cref{th33}.
\end{proof}

\begin{remark}
\label{re}
 The combinatorial shifting technique, which is usually used to prove the Kruskal-Katona theorem and has many other applications in extremal set theory (see, e.g., \cite{Fr}), cannot be applied to prove \Cref{th33}. For $\mathcal{F}\subseteq \binom{\N}{d}$ and $i>1$ set
 \begin{align*}
  S_i(\ub)&=
  \begin{cases}
   (\ub\setminus \{i\})\cup\{1\}&\text{if }\ i\in\ub,\; 1\not\in\ub,\;(\ub\setminus \{i\})\cup\{1\}\not\in \mathcal{F},\\
   \ub&\text{otherwise},
  \end{cases}\\
  S_i(\mathcal{F})&=\{S_i(\ub)\mid \ub\in\mathcal{F}\}.
 \end{align*}
 Then one has the following inclusion
 \[
  \partial(S_i(\mathcal{F}))\subseteq S_i(\partial(\mathcal{F})),
 \]
 which is essential for the proof of the Kruskal-Katona theorem using the combinatorial shifting technique. However, as one can easily check, there is no inclusion relation between $\Inc(S_i(\mathcal{F}))$ and $S_i(\Inc(\mathcal{F}))$ in general.
\end{remark}

\section{Simplicial complexes and Inc action}
\label{sec4}

In this section we derive some consequences when applying the results of the previous section to simplicial complexes. The main outcome is \Cref{thm:onemainresult} where we characterize all possible $f$-vectors of chains of simplicial complexes that are combinatorial invariant under the action of the monoid $\Inc$.

Recall that a \emph{simplicial complex} $\Delta$ on the vertex set $[m]$ is a collection of subsets of $[m]$ that is closed under inclusion, i.e., if $F\in \Delta$ and $G\subseteq F$, then $G\in\Delta$. In other words, $\Delta$ is a simplicial complex if and only if $\partial\Delta\subseteq\Delta$.

Let $\Delta$ be a simplicial complex. Elements of $\Delta$ are called \emph{faces}. For $d\ge 1$ let $F_d(\Delta)$ be the subset of $\Delta$ consisting of all faces $F$ with $|F|=d$. We say that $\Delta$ is a \emph{shifted} (respectively, \emph{compressed}) \emph{complex} if $F_d(\Delta)$ is a shifted (respectively, {compressed}) family for every $d\ge 1$. Set $f_{d-1}(\Delta)=|F_d(\Delta)|$. Then the vector
\[
 {\bf f}(\Delta)=(f_{0}(\Delta),f_1(\Delta),f_2(\Delta),\dots)
\]
is called the \emph{$f$-vector} of $\Delta$. Note that $f_{d}(\Delta)=0$ for $d\gg 0$. Hence ${\bf f}(\Delta)$ belongs to the set
\[
 \Z_{\ge0}^{(\infty)} :=\{(n_i)_{i\ge0}\mid n_i\in\Z_{\ge0}\ \text{ and }\ n_i=0\ \text{ for }\ i\gg0\}.
\]
The numerical version of the Kruskal-Katona theorem provided in \Cref{sec2} leads to the following characterization of $f$-vectors of simplicial complexes (see \cite[Theorem 8.5]{GK}): a sequence ${\bf f}=(f_{0},f_1,\dots)\in\Z_{\ge0}^{(\infty)}$ is the $f$-vector of a simplicial complex if and only if
\[
 \partial_d(f_{d})\le f_{d-1}\quad \text{for all}\ d\ge 1.
\]

Recall that the {$\Inc$-image} of $\Delta$ is defined by
\[
 \Inc(\Delta)=\bigcup_{d\ge 1}\Inc(F_d(\Delta)).
\]
As one can easily check, $\Inc(\Delta)$ is a simplicial complex.
The next result is an immediate consequence of \Cref{lm33} and \Cref{co34}.

\begin{corollary}
 If $\Delta$ is a shifted (respectively, {compressed}) complex, then so is $\Inc(\Delta)$.
\end{corollary}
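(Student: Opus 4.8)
The plan is to reduce everything to the two ingredients already in hand: Lemma~\ref{lm33} (which says $\Inc(C(\ub))=C(\ub+\mathbf{1})$) and Corollary~\ref{co34} (which says that $\Inc$ preserves shiftedness of families of $d$-sets), applied degree by degree. So first I would unwind the definitions: $\Delta$ being a shifted (respectively compressed) complex means precisely that $F_d(\Delta)$ is a shifted (respectively compressed) family in $\binom{\N}{d}$ for every $d\ge1$. Likewise, by its definition $\Inc(\Delta)=\bigcup_{d\ge1}\Inc(F_d(\Delta))$, and since the action \eqref{action} preserves cardinality of sets, $\Inc$ sends $d$-sets to $d$-sets; hence $F_d(\Inc(\Delta))=\Inc(F_d(\Delta))$ for every $d\ge1$. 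Thus checking that $\Inc(\Delta)$ is shifted (respectively compressed) amounts to checking that each $\Inc(F_d(\Delta))$ is a shifted (respectively compressed) family.

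For the shifted case this is now immediate: by hypothesis each $F_d(\Delta)$ is shifted, so Corollary~\ref{co34} gives that $\Inc(F_d(\Delta))=F_d(\Inc(\Delta))$ is shifted, and as this holds for all $d\ge1$, $\Inc(\Delta)$ is a shifted complex. For the compressed case I would use that a finite compressed family is of the form $C(\ub)$ with $\ub=\max F_d(\Delta)$ (as recalled in Section~\ref{sec2}); then Lemma~\ref{lm33} gives $F_d(\Inc(\Delta))=\Inc(C(\ub))=C(\ub+\mathbf{1})$, which is again an initial segment of $\binom{\N}{d}$, i.e.\ a compressed family. (The degenerate case $F_d(\Delta)=\emptyset$ gives $\Inc(F_d(\Delta))=\emptyset$, which is trivially compressed, so it causes no trouble.) Since this holds for each $d\ge1$, $\Inc(\Delta)$ is a compressed complex. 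The only point that needs a word of care is that $\Inc(\Delta)$ is genuinely a simplicial complex in the first place, i.e.\ closed under taking subsets; but this is already asserted in the text just before the corollary (``$\Inc(\Delta)$ is a simplicial complex''), so I may simply invoke it, or note that $\partial\Inc(\Delta)\subseteq\Inc(\Delta)$ follows from $\partial\Inc(\ub)\subseteq\Inc(\partial\ub\cup\{\ub\text{-subsets}\})$ together with $\partial\Delta\subseteq\Delta$.

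I do not anticipate any real obstacle here: the statement is a direct ``package'' corollary of Lemma~\ref{lm33} and Corollary~\ref{co34}, and the entire content of the proof is the bookkeeping observation $F_d(\Inc(\Delta))=\Inc(F_d(\Delta))$ that lets one argue one degree at a time. If anything deserves emphasis it is that the $f$-vector is unchanged and that compressedness is a stronger property than shiftedness, so the compressed half of the statement does not follow formally from the shifted half and really does need Lemma~\ref{lm33} rather than just Corollary~\ref{co34}.
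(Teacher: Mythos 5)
Your proof is correct and follows the paper's intended route exactly: the paper states the corollary as an immediate consequence of Lemma~\ref{lm33} and Corollary~\ref{co34}, and your degree-by-degree reduction via $F_d(\Inc(\Delta))=\Inc(F_d(\Delta))$ is precisely the bookkeeping that makes this work.
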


The compression $\mathcal{C}(\Delta)$ of $\Delta$ is defined in an obvious way. By \Cref{th33} we get

\begin{corollary}
 For any simplicial complex $\Delta$ one has
 \[
  \Inc(\mathcal{C}(\Delta))\subseteq\mathcal{C}(\Inc(\Delta)).
 \]
\end{corollary}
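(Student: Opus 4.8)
The plan is to deduce the corollary directly from \Cref{th33} by applying it degree by degree. The statement $\Inc(\mathcal{C}(\Delta))\subseteq\mathcal{C}(\Inc(\Delta))$ is a statement about subsets of $\binom{\N}{d}$ for each $d\ge 1$, so it suffices to show that $(\Inc(\mathcal{C}(\Delta)))_d\subseteq(\mathcal{C}(\Inc(\Delta)))_d$ for every $d$, where the subscript denotes the subfamily of $d$-sets.

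First I would unwind the definitions. By definition of the compression of a complex, $F_d(\mathcal{C}(\Delta))=\mathcal{C}(F_d(\Delta))$ for each $d$; and by definition of the $\Inc$-image of a complex, $(\Inc(\mathcal{C}(\Delta)))_d=\Inc(F_d(\mathcal{C}(\Delta)))=\Inc(\mathcal{C}(F_d(\Delta)))$ and similarly $(\Inc(\Delta))_d=\Inc(F_d(\Delta))$. Thus on the left-hand side we get $\Inc(\mathcal{C}(F_d(\Delta)))$, and applying \Cref{th33} to the finite family $\mathcal{F}=F_d(\Delta)\subseteq\binom{\N}{d}$ gives
\[
 \Inc(\mathcal{C}(F_d(\Delta)))\subseteq\mathcal{C}(\Inc(F_d(\Delta))).
\]
It then remains to compare $\mathcal{C}(\Inc(F_d(\Delta)))$ with $(\mathcal{C}(\Inc(\Delta)))_d$. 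Since $\Inc(\Delta)$ is a simplicial complex, its compression $\mathcal{C}(\Inc(\Delta))$ is formed by compressing each $F_d(\Inc(\Delta))=\Inc(F_d(\Delta))$ separately, so $(\mathcal{C}(\Inc(\Delta)))_d=\mathcal{C}(\Inc(F_d(\Delta)))$. Stringing the three identifications together with the inclusion from \Cref{th33} yields the claim in degree $d$, and since $d$ was arbitrary we are done.

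The only point requiring a little care — and the place I would expect a referee to want a word — is the compatibility of the "compression of a complex" operation with the grading: namely that $\mathcal{C}(\Delta)$ is again a simplicial complex and that $F_d(\mathcal{C}(\Delta))=\mathcal{C}(F_d(\Delta))$. This is exactly the content of the phrase "defined in an obvious way" preceding the corollary, so I would either recall that $\mathcal{C}(\Delta)$ is defined degreewise by $F_d(\mathcal{C}(\Delta)):=\mathcal{C}(F_d(\Delta))$ (and that this is closed under taking subsets because $\partial\mathcal{C}(F_d(\Delta))\subseteq\mathcal{C}(\partial F_d(\Delta))\subseteq\mathcal{C}(F_{d-1}(\Delta))$ by the Kruskal-Katona theorem recalled in \Cref{sec2}), or simply cite that definition. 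No genuine obstacle arises: the corollary is a purely formal, degreewise consequence of \Cref{th33}.

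\begin{proof}
 It suffices to prove the inclusion in each degree $d\ge 1$. By definition, $F_d(\mathcal{C}(\Delta))=\mathcal{C}(F_d(\Delta))$ and $F_d(\Inc(\Delta))=\Inc(F_d(\Delta))$ for every $d$. Hence
 \[
  \big(\Inc(\mathcal{C}(\Delta))\big)_d=\Inc\big(F_d(\mathcal{C}(\Delta))\big)=\Inc\big(\mathcal{C}(F_d(\Delta))\big),
 \]
 while, since $\Inc(\Delta)$ is a simplicial complex and compression of a complex is taken degreewise,
 \[
  \big(\mathcal{C}(\Inc(\Delta))\big)_d=\mathcal{C}\big(F_d(\Inc(\Delta))\big)=\mathcal{C}\big(\Inc(F_d(\Delta))\big).
 \]
 Applying \Cref{th33} to the finite family $F_d(\Delta)\subseteq\binom{\N}{d}$ gives
 \[
  \Inc\big(\mathcal{C}(F_d(\Delta))\big)\subseteq\mathcal{C}\big(\Inc(F_d(\Delta))\big),
 \]
 which is precisely $\big(\Inc(\mathcal{C}(\Delta))\big)_d\subseteq\big(\mathcal{C}(\Inc(\Delta))\big)_d$. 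As $d$ was arbitrary, the claim follows.
\end{proof}
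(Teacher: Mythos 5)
Your proof is correct and follows exactly the route the paper intends: the corollary is stated as an immediate consequence of \Cref{th33}, applied degreewise via the identifications $F_d(\mathcal{C}(\Delta))=\mathcal{C}(F_d(\Delta))$ and $F_d(\Inc(\Delta))=\Inc(F_d(\Delta))$, which is precisely what you spell out. Your added remark that $\mathcal{C}(\Delta)$ is again a simplicial complex (via the Kruskal-Katona inclusion $\partial\mathcal{C}(F_d(\Delta))\subseteq\mathcal{C}(\partial F_d(\Delta))$) is a reasonable elaboration of what the paper leaves to the phrase ``defined in an obvious way.''
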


Next, from \Cref{numerical} we obtain the following relation between the $f$-vectors of $\Delta$ and $\Inc(\Delta)$.

\begin{corollary}
\label{co43}
 For any simplicial complex $\Delta$ one has
 \[
  f_d(\Inc(\Delta))\ge \Inc^{[d+1]}(f_d(\Delta)) \quad\text{for }\ d\ge0.
 \]
\end{corollary}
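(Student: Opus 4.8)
The plan is to reduce \Cref{co43} directly to the numerical version of \Cref{th33}, namely \Cref{numerical}, by identifying the faces of $\Inc(\Delta)$ of a fixed cardinality. Recall the convention $f_d(\Delta)=|F_{d+1}(\Delta)|$, where $F_{d+1}(\Delta)$ is the set of faces of $\Delta$ of cardinality $d+1$, and that by definition $\Inc(\Delta)=\bigcup_{e\ge1}\Inc(F_e(\Delta))$.

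First I would observe that $F_{d+1}(\Inc(\Delta))=\Inc(F_{d+1}(\Delta))$. Indeed, by the action in \eqref{action} every $\pi\in\Inc_1$ sends an $e$-set to an $e$-set, so $\Inc(F_e(\Delta))\subseteq\binom{\N}{e}$ for each $e\ge1$. Hence, among the members of the union $\bigcup_{e\ge1}\Inc(F_e(\Delta))$, the ones of cardinality $d+1$ are exactly those contributed by the single term $e=d+1$, which gives the claimed equality. Note that for this step one does not even need that $\Inc(\Delta)$ is a simplicial complex; it is the elementary fact that passing to the $\Inc$-image does not mix cardinalities.

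Then I would apply \Cref{numerical} to the finite family $\mathcal{F}=F_{d+1}(\Delta)\subseteq\binom{\N}{d+1}$, which yields
\[
 f_d(\Inc(\Delta))=|F_{d+1}(\Inc(\Delta))|=|\Inc(F_{d+1}(\Delta))|\ge\Inc^{[d+1]}\bigl(|F_{d+1}(\Delta)|\bigr)=\Inc^{[d+1]}(f_d(\Delta)),
\]
which is the assertion. The degenerate situation $f_d(\Delta)=0$ (in particular when $F_{d+1}(\Delta)=\emptyset$) is covered by the convention $\Inc^{[d+1]}(0)=0$ together with $f_d(\Inc(\Delta))\ge0$.

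There is essentially no hard step here: the corollary is a one-line consequence of \Cref{numerical} once the cardinality-by-cardinality bookkeeping is made explicit. The only point deserving care is matching the indexing convention $f_d(\Delta)=|F_{d+1}(\Delta)|$ with the superscript in $\Inc^{[d+1]}$, and checking that $\binom{\N}{d+1}$ is the ambient set to which \Cref{numerical} is applied, so that the estimate holds verbatim in each cardinality.
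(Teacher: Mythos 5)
Your proposal is correct and is exactly the argument the paper intends: the corollary is stated as an immediate consequence of \Cref{numerical}, and your explicit bookkeeping that $F_{d+1}(\Inc(\Delta))=\Inc(F_{d+1}(\Delta))$ (since the $\Inc_1$-action preserves cardinalities) followed by applying \Cref{numerical} to $F_{d+1}(\Delta)\subseteq\binom{\N}{d+1}$ is precisely the intended reduction. Nothing is missing, and your remark about the $f_d(\Delta)=0$ convention is a harmless extra check.
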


This result yields a characterization of $f$-vectors of chains of simplicial complexes that are combinatorial invariant under the action of the monoid $\Inc$. For each $n\ge1$ let $\Delta_n$ be a simplicial complex. We say that the chain $(\Delta_n)_{n\ge 1}$ is \emph{\cinv} if
\[
 \Inc(\Delta_n)\subseteq\Delta_{n+1}\quad\text{for all }\ n\ge 1.
\]
Notice that this notion is totally different from the notion of $\Inc$-invariant chains of simplicial complexes given in the introduction, which stems from an algebraic notion of $\Inc$-invariant chains of ideals.

Assume that the chain of simplicial complexes $(\Delta_n)_{n\ge 1}$ is \cinv. Then the chain of $f$-vectors $({\bf f}(\Delta_n))_{n\ge1}$ must satisfy the inequalities posed by the Kruskal-Katona theorem and \Cref{co43}. The next result shows that these inequalities are enough to characterize such chains of $f$-vectors.

\begin{theorem}
\label{thm:onemainresult}
 For each $n\ge 1$ let ${\bf f}_n=(f_{n,0},f_{n,1},\dots)\in\Z_{\ge0}^{(\infty)}$. The following conditions are equivalent:
 \begin{enumerate}
  \item
  There exists a {\cinv} chain of simplicial complexes $(\Delta_n)_{n\ge 1}$ such that ${\bf f}_n={\bf f}(\Delta_n)$ for all $n\ge 1$.
  \item
  For all $n,d\ge 1$ one has
  \[
   \partial_d(f_{n,d})\le f_{n,d-1}\quad\text{and}\quad f_{n+1,d-1}\ge \Inc^{[d]}(f_{n,d-1}).
  \]
 \end{enumerate}
\end{theorem}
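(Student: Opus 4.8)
The plan is to prove the equivalence of (i) and (ii) by constructing, in the direction (ii)$\Rightarrow$(i), an explicit chain of \emph{compressed} simplicial complexes realizing the prescribed $f$-vectors, and by reading off the inequalities in (ii) directly from Kruskal--Katona and \Cref{co43} in the direction (i)$\Rightarrow$(ii).

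For (i)$\Rightarrow$(ii): given a \cinv{} chain $(\Delta_n)$ with ${\bf f}(\Delta_n)={\bf f}_n$, the inequality $\partial_d(f_{n,d})\le f_{n,d-1}$ is just the numerical Kruskal--Katona theorem applied to the simplicial complex $\Delta_n$ (recalled at the start of \Cref{sec4}). For the second inequality, since $\Inc(\Delta_n)\subseteq\Delta_{n+1}$ and both are simplicial complexes, we have $f_{d-1}(\Delta_{n+1})\ge f_{d-1}(\Inc(\Delta_n))$, and by \Cref{co43} applied with $d-1$ in place of $d$ we get $f_{d-1}(\Inc(\Delta_n))\ge \Inc^{[d]}(f_{d-1}(\Delta_n))=\Inc^{[d]}(f_{n,d-1})$. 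Chaining these gives $f_{n+1,d-1}\ge\Inc^{[d]}(f_{n,d-1})$, as required.

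For (ii)$\Rightarrow$(i): for each $n$ and each $d\ge 1$ let $\mathcal{F}_{n,d}\subseteq\binom{\N}{d}$ be the compressed family with $|\mathcal{F}_{n,d}|=f_{n,d-1}$, and set $\Delta_n=\bigcup_{d\ge 1}\mathcal{F}_{n,d}$. The first inequality $\partial_d(f_{n,d})\le f_{n,d-1}$ together with the fact that $\partial\mathcal{F}_{n,d+1}$ is compressed (recalled in \Cref{sec2}) guarantees $\partial\mathcal{F}_{n,d+1}\subseteq\mathcal{F}_{n,d}$ (the compressed set of size $|\partial\mathcal{F}_{n,d+1}|=\partial_{d+1}(f_{n,d})\le f_{n,d-1}$ is contained in the compressed set of size $f_{n,d-1}$); hence each $\Delta_n$ is a simplicial complex with ${\bf f}(\Delta_n)={\bf f}_n$. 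It remains to check $\Inc(\Delta_n)\subseteq\Delta_{n+1}$, i.e.\ $\Inc(\mathcal{F}_{n,d})\subseteq\mathcal{F}_{n+1,d}$ for every $d$. Since $\mathcal{F}_{n,d}=C(\ub)$ for $\ub=\max\mathcal{F}_{n,d}$, \Cref{lm33} gives $\Inc(\mathcal{F}_{n,d})=C(\ub+\mathbf{1})$, a compressed set of size $\Inc^{[d]}(f_{n,d-1})$; the second inequality $f_{n+1,d-1}\ge\Inc^{[d]}(f_{n,d-1})$ then says this compressed set has size at most $|\mathcal{F}_{n+1,d}|$, and since $\mathcal{F}_{n+1,d}$ is also compressed we conclude $\Inc(\mathcal{F}_{n,d})=C(\ub+\mathbf{1})\subseteq\mathcal{F}_{n+1,d}$. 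Thus $(\Delta_n)$ is \cinv.

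The only mild subtlety — and the step I would be most careful about — is the bookkeeping in (ii)$\Rightarrow$(i): one must verify that the \emph{same} chain of compressed complexes simultaneously satisfies the simplicial-complex condition $\partial\Delta_n\subseteq\Delta_n$ for every $n$ and the nesting $\Inc(\Delta_n)\subseteq\Delta_{n+1}$, using both families of inequalities in (ii), and that passing between "$|\cdot|$ bounds" and "$\subseteq$" is legitimate precisely because all sets in sight are compressed (here the key facts are that $\partial\mathcal{C}(\mathcal{F})$ and $\Inc(\mathcal{C}(\mathcal{F}))$ are again compressed, from \Cref{sec2} and \Cref{lm33}). No genuinely hard new idea is needed beyond \Cref{numerical}/\Cref{co43}; the theorem is essentially the "$\Inc$-enhanced" analogue of the classical Kruskal--Katona characterization of $f$-vectors, and the proof mirrors that classical argument.
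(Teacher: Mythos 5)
Your proposal is correct and follows essentially the same route as the paper: (i)$\Rightarrow$(ii) by Kruskal--Katona together with \Cref{co43}, and (ii)$\Rightarrow$(i) by taking each $F_d(\Delta_n)$ to be the compressed family of size $f_{n,d-1}$ and deducing both $\partial\Delta_n\subseteq\Delta_n$ and $\Inc(\Delta_n)\subseteq\Delta_{n+1}$ from the numerical inequalities. The only difference is that you spell out the justification (shadows and $\Inc$-images of compressed sets are again compressed, via \Cref{sec2} and \Cref{lm33}, so cardinality bounds upgrade to inclusions) which the paper leaves implicit.
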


\begin{proof}
 The implication (i)$\Rightarrow$(ii) follows immediately from the Kruskal-Katona theorem and \Cref{co43}. Let us prove (ii)$\Rightarrow$(i). For $n,d\ge 1$ let $F_d(\Delta_n)$ be the compressed subset of $\binom{\N}{d}$ of cardinality $f_{n,d-1}$. Now set
\[
 \Delta_n=\bigcup_{d\ge 1}F_d(\Delta)\quad \text{for }\ n\ge 1.
\]
 Then the inequalities $\partial_d(f_{n,d})\le f_{n,d-1}$ imply that $\Delta_n$ is a simplicial complex, whereas the inequalities $f_{n+1,d-1}\ge \Inc^{[d]}(f_{n,d-1})$ imply that $\Inc(\Delta_n)\subseteq\Delta_{n+1}$. In other words, $(\Delta_n)_{n\ge 1}$ is a {\cinv} chain of simplicial complexes. By construction, it is evident that ${\bf f}_n={\bf f}(\Delta_n)$ for all $n\ge 1$.
\end{proof}

\section{Open Problems}\label{sec5}

Here we give some problems which might be of interest. The first one arises naturally from \Cref{pb} and the Clements-Lindstr\"{o}m theorem \cite{CL}.

\begin{problem}
 Study \Cref{pb} for multisets.
\end{problem}

The next problem is motivated by \Cref{th33}. 
\begin{problem}
Characterize finite families $\mathcal{F}\subseteq \binom{\N}{d}$ for which the inclusion in \Cref{th33} (or equivalently the inequality in \Cref{numerical}) becomes an equality.
\end{problem}

\begin{remark}
Observe that in \Cref{th33} 
the inclusion  becomes an equality for compressed families of sets. But these are not the only examples. For instance, one can take $\mathcal{F}=\{\pi_i(\ub),\pi_{i+1}(\ub),\dots,\pi_j(\ub)\}$ for some $j\ge i\ge 1$ and some $\ub\in \binom{\N}{d}$ (see \Cref{sec2} for the definition of $\pi_i$).
\end{remark}

Given \Cref{thm:onemainresult} it is of interest to consider the following
\begin{problem}
Study  properties of $h$-vectors of {\cinv} chains of simplicial complexes $(\Delta_n)_{n\ge 1}$.
\end{problem}

The last problem concerns stability of chains of simplicial complexes. Let $(\Delta_n)_{n\ge 1}$ be a chain of simplicial complexes with $\Delta_n$ having the vertex set $[n]$. As mentioned in the introduction, if the chain $(\Delta_n)_{n\ge 1}$ is  $\Inc$-invariant, then it stabilizes in the sense that
\[
 \Inc(\mathcal{N}(\Delta_n))= \mathcal{N}(\Delta_{n+1})\quad \text{for }\ n\gg0,
\]
where $\mathcal{N}(\Delta_n)$ denotes the set of non-faces of $\Delta_n$ (see \cite[Theorem 3.6]{HS12}). One might wonder whether an analogous conclusion holds for {\cinv} chains of simplicial complexes.

\begin{problem}
 Let $(\Delta_n)_{n\ge 1}$ be a {\cinv} chain of simplicial complexes with $\Delta_n$ having the vertex set $[n]$. Is it true that the chain always stabilizes, that is,
\[
 \Inc(\Delta_n)=\Delta_{n+1}\quad\text{for all }\ n\gg0?
\]
\end{problem}

\end{document}